\newcommand\scc{\mathrm{sc}}
\newcommand\bb{\mathrm{b}}
\numberwithin{equation}{section}
\newtheorem{proposition}{Proposition}[section]
\newtheorem{definition}{Definition}[section]
\newtheorem{lemma}{Lemma}[section]
\newtheorem{theorem}{Theorem}[section]
\newtheorem{remark}{Remark}[section]
\begin{document}
\title[Global-in-time estimates]{Global-in-time Strichartz estimates for Schr\"odinger on scattering manifolds}

\author{Junyong Zhang}
\address{Department of Mathematics, Beijing Institute of Technology, Beijing
100081, China and Department of Mathematics, Stanford
University,  USA}
\email{zhang\_junyong@bit.edu.cn}

\author{Jiqiang Zheng}
\address{Universit\'e C\^{o}te d'Azur, CNRS, LJAD, France}
\email{zhengjiqiang@gmail.com, zheng@unice.fr}

\maketitle


\begin{abstract}
We study the global-in-time Strichartz estimates for the Schr\"odinger equation on a class of scattering manifolds $X^{\circ}$. Let $\mathcal{L}_V=\Delta_g+V$ where $\Delta_g$ is the Beltrami-Laplace
operator on the scattering manifold and $V$ is a real potential function on this setting.
We first extend the global-in-time Strichartz estimate in Hassell-Zhang \cite{HZ} on the requirement of $V(z)=O(\langle z\rangle^{-3})$ to $O(\langle z\rangle^{-2})$
and secondly generalize the result to the scattering manifold with a mild trapped set as well as Bouclet-Mizutani\cite{BM} but with a potential.
We also obtain a global-in-time local smoothing estimate on this geometry setting.

\end{abstract}

\begin{center}
 \begin{minipage}{120mm}
   { \small {\bf Key Words: Resolvent estimate, scattering manifold, Strichartz estimate, mild trapped set}
      {}
   }\\
    { \small {\bf AMS Classification:}
      { 42B37, 35Q40, 47J35.}
      }
 \end{minipage}
 \end{center}

\maketitle 

\section{Introduction}

We continue the investigations carried out in \cite{HZ} about the global-in-time Strichartz estimates on a class of scattering manifold introduced by Melrose \cite{Melrose}.
There are too many work devoted to the study of Strichartz inequalities to cite all here and we focus on the Strichartz estimates on the scattering, i.e. asymptotically conic,  setting.

Let $(X^\circ, g)$ be the scattering manifold of dimension $n\geq3$ and let $\Delta_g$ be the nonnegative Laplace operator on $X^\circ$,
assume that $X^\circ$ is nontrapping,  Hassell-Tao-Wunsch \cite{HTW} established the
local in time Strichartz inequalities
\begin{equation}\label{local-S}
\|e^{it\Delta_g}u_0\|_{L^q_tL^r_z([0,1]\times X^\circ)}\leq
C\|u_0\|_{L^2(X^\circ)}
\end{equation}
where
$(q, r)$ is an \emph{admissible pair}, i.e.
\begin{equation}\label{admissible}
2\leq q,r\leq\infty, \quad 2/q+n/r=n/2,\quad (q,r,n)\neq(2,\infty,2).
\end{equation}
This result was improved to global-in-time and generalized to $\mathcal{L}_V=\Delta_g+V$ where $V$ with suitably regular and decaying at infinity in Hassell-Zhang \cite{HZ}, and
we record the result here
\begin{equation}\label{l-S}
\begin{split}
\|e^{it\mathcal{L}_V}u_0\|_{L^q_t(\R;L^r_z(X^\circ))}\leq C\|u_0\|_{L^2_z(X^\circ)}
\end{split}
\end{equation}
and
\begin{equation}\label{in-S}
\begin{split}
\big\|\int_0^te^{i(t-s)\mathcal{L}_V}F(s)ds\big\|_{L^q_t(\R;L^r_z(X^\circ))}\leq C\|F\|_{L^{\tilde{q}'}_t(\R;L^{\tilde{r}'}_z(X^\circ))}.
\end{split}
\end{equation}
where $(q,r), (\tilde{q},\tilde{r})$ are any admissible pairs. It is known that the Strichartz estimate must have some loss of derivative when the manifold has some trapped
geodesic flow; see \cite{BGT}. However  Burq-Guillarmou-Hassell \cite{BGH} proved the local-in-time Strichartz estimate without loss
 on the scattering manifold with a trapped set  which is hyperbolic and of sufficiently small fractal dimension.
If a set is trapped in this sense i.e. in \cite{BGH}, we say the trapped set is a mild trapped set.
In a very recent work Bouclet-Mizutani\cite{BM}, the authors generalized the above Strichartz estimates to gobal-in-time one (except the endpoint estimate i.e $q=2$) on a scattering manifold allowing with mild trapped
geodesic (in sense of \cite{BGH}) but without any potential. In this paper, we aim to first extend the global-in-time Strichartz estimate in Hassell-Zhang \cite{HZ} on the requirement of $V(z)=O(\langle z\rangle^{-3})$ to $O(\langle z\rangle^{-2})$ and also secondly generalize the result to the scattering manifold with a mild trapped set as well as \cite{BGH, BM} but with a potential.
We remark that the global-in-time estimate is more delicate than the local-in-time one since one need to understand the boundedness of operators, e.g. resolvent operator, both at
high frequency and low energy. \vspace{0.2cm}

Our problem lies in the scattering geometric
setting, which is the same as in \cite{HTW, HZ, VW}. Let $(X^\circ,g)$ be a complete non-compact Riemannian manifold of
dimension $n\geq2$ with one end diffeomorphic to $(0,\infty)\times
Y$ and let $X$ be a its compactified manifold with boundary $Y=\partial X$. A function $x$ is said to be a \emph{boundary defining function}
for $X$ if $x$ is positive smooth function on $X$ such that $\partial X=\{x=0\}$ and $dx\neq 0$ on $\partial X$. Following Melrose \cite{Melrose}, we say a Riemannian metric $g$ on $X$ is
a \emph{scattering metric} if we can write $g$ in a collar neighborhood of $\partial X$ for some choice of boundary defining function $x$ as follows
\begin{equation}\label{sc-metric}
g=\frac{\mathrm{d}x^2}{x^4}+\frac{h(x)}{x^2}
\end{equation}
where $h\in C^\infty(Sym^2(T^*X))$ is a smooth family of metrics on $\partial X$. The manifold $(X^\circ,g)$ is called a asymptotically conic manifold or scattering manifold if
$g$ is a scattering metric.  Near the boundary $\partial X$, we
use the local coordinates $(x,y)$ on $X$ where $y=(y_1,\cdots,y_{n-1})$ is the local coordinates on $Y=\partial
X$, and  use the coordinate $z=(z_1,\cdots,z_n)$ when away
from $\partial X$.
We say the manifold $X$ is
non-trapping if every geodesic
$z(s)$ in $X$ reaches $Y$ as $s\rightarrow\pm\infty$. The function $r:=1/x$ near $x=0$ can be thought of as a
``radial" variable near infinity and $y = (y_1, \dots, y_{n-1})$ can be regarded as  $n-1$
``angular" variables.
\vspace{0.2cm}

Let $(X^\circ,g)$ be a scattering manifold and let $dv=\sqrt{g}dz$ be the measure  induced by the metric $g$, we define the complex
Hilbert space $L^2(X^\circ)$ is given by the inner product
\begin{equation*}
\langle f,
g\rangle_{L^2(X^\circ)}=\int_{X^\circ}f(z)\overline{g(z)} dv
\end{equation*}
Define
$\Delta_g=\nabla^*\nabla$ to be the positive Laplace-Beltrami operator on
$X$;  Consider the Sch\"odinger operator
\begin{equation}
\mathcal{L}_V:=\Delta_g+V(z)
\end{equation}
where the potential $V$ is a real function on $X$ such that
\begin{equation}\label{A1}
V\in C^\infty(X),~ V(x,y)=O(x^2) ~\text{as}~x\rightarrow0.
\end{equation}
and $V_0:=(x^{-2}V)|_{\partial X}$ satisfies
\begin{equation}\label{A2}
\Delta_{h}+(n-2)^2/4+V_0>0~\text{on}~L^2(\partial X, h(0)).
\end{equation}
Here $\Delta_{h}$ is the positive Laplacian with respect to
the metric $h(0)$ and \eqref{A2} is meant in the strict sense that the bottom of the spectrum of the operator is \emph{strictly}  positive. Note that this means that $V_0=0$ is allowed for $n \geq 3$ but not $n=2$.
We assume that
\begin{equation}\label{A3} \mathcal{L}_V :=\Delta_g+V~\text{has no nonpositive eigenvalues or zero-resonance}.
\end{equation}
Compared with the assumption on the operator $\mathcal{L}_V$ in Hassell-Zhang \cite{HZ}, most assumptions are same except $V(x,y)=O(x^2)$ instead of $V(x,y)=O(x^3)$ as $x\to 0$.
The assumption allows some negative potential $V(x,y)>-cx^2$ with some small positive constant $c<(n-2)^2/4$. Note that if $V\geq 0$, zero resonance does not exist.
Under these assumptions, we can use the results of \cite{GHS1} but not \cite{GHS2}. \vspace{0.2cm}

As mentioned in \cite[Remark 3.7]{HZ}, if $V \in x^2 C^\infty(X)$ and $V_0 := x^{-2} V |_{\partial X}$ takes values in the range $(-(n-2)^2/4, 0)$, then from \cite[Corollary 1.5]{GHS1} we see that the $L^1 \to L^\infty$ norm of the propagator is at least a constant times $t^{-(\nu_0 + 1)}$ as $t \to \infty$, where $\nu_0^2$ is the smallest eigenvalue of $\Delta_{h} + V_0 + (n-2)^2/4$. Under the above assumption on the range of $V_0$,
we see that $\nu_0 <(n-2)/2$. This implies that the dispersive estimate (1-12) in \cite{HZ} will no longer be valid  as $|t-s| \to \infty$, hence we can not obtain dispersive estimate and then
use Keel-Tao's abstract method to obtain the full set of Strichartz estimate as \cite{HZ} did. However the global-in-time Strichartz estimate
still can be derived from the usual Rodnianski-Schlag method \cite{RS}, for example the Strichartz estimate established in \cite{BPST} for negative inverse-square potentials on $\R^n$.
We will first use method of Vasy-Wunsch \cite{VW} and Bony-H\"afner \cite{BH} to obtain the resolvent estimate for $\mathcal{L}_V$ and then establish the Strichartz estimate via this idea. \vspace{0.2cm}

The geometry of manifold, which does not occurs on Euclidean space, has great affect on the establishment of the global-in-time Strichartz estimate, for example the conjugated points mentioned
in \cite{HW, HZ} and the trapped geodesics \cite{B, Doi}. The non-trapping assumption in  \cite{HZ} is not necessary to obtain the Strichartz estimate, it seems the scattering manifold with a mild trapped set in sense of
\cite{BGH} where the local-in-time Strichartz estimate were established without loss. Since the trapping only influences sensitive at high frequency and however the high frequency is corresponding to the short time
dynamic behavior of Sch\"odinger operator, the low frequency estimates (corresponding to long-time) in \cite{HZ} and the strategy for high frequency in \cite{BGH} will allow us to obtain the
global-in-time Strichartz estimate. As well as \cite{BM}, we also provide an alternative proof based on the results in \cite{HZ} and \cite{BGH}.\vspace{0.2cm}

The main purpose of this paper is to prove
the following results.

\begin{theorem}\label{thm} Let $(X^\circ,g)$ be a scattering manifold of dimension
$n\geq3$. Let $\mathcal{L}_V=\Delta_g+V$ satisfy
\eqref{A1}, \eqref{A2} and \eqref{A3}.

(i) if $(X^\circ,g)$ is nontrapping, then for all admissible pair $(q,r)\in [2,\infty]^2$ satisfying \eqref{admissible}, it holds
\begin{equation}\label{Str-est-n}
\|e^{it\mathcal{L}_V}u_0\|_{L^q_tL^r_z(\mathbb{R}\times X^\circ)}\leq
C\|u_0\|_{L^2(X^\circ)}.
\end{equation}

(ii) if $(X^\circ,g)$ has a hyperbolic trapped set satisfying the assumptions of \cite{BGH} and $V(z)=O(\langle z\rangle^{-2-\epsilon})$ for any small $\epsilon>0$, then
\begin{equation}\label{Str-est-t}
\|e^{it\mathcal{L}_V}u_0\|_{L^q_tL^r_z(\mathbb{R}\times X^\circ)}\leq
C\|u_0\|_{L^2(X^\circ)}
\end{equation}
holds for  all admissible pair $(q,r)$ with $q>2$ satisfying \eqref{admissible}.

\end{theorem}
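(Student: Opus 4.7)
The plan is to prove both parts through a frequency-localized approach, separating low and high frequencies via a Littlewood--Paley decomposition. At low frequency the geometry of any trapped set is irrelevant, so the same mechanism handles both (i) and (ii); at high frequency the presence or absence of trapping dictates which tool to use.

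For part (i), the Strichartz estimates for the free propagator $e^{it\Delta_g}$ under the non-trapping assumption already follow from \cite{HZ}. The new difficulty is that $V=O(x^2)$ rather than $O(x^3)$ prevents the pointwise dispersive bound from extending to all time, as explained in the introduction following \cite{GHS1}; hence Keel--Tao's abstract method is unavailable. Instead I would adopt the Rodnianski--Schlag perturbation argument: from the Duhamel formula
\begin{equation*}
 e^{it\mathcal{L}_V} u_0 = e^{it\Delta_g} u_0 - i \int_0^t e^{i(t-s)\Delta_g}\, V\, e^{is\mathcal{L}_V} u_0 \, ds,
\end{equation*}
apply the inhomogeneous Strichartz estimate for $e^{it\Delta_g}$ to the right-hand side, and bound the source term by a local smoothing estimate for $e^{it\mathcal{L}_V}$ with weight $\langle z\rangle^{-1}$. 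The weight absorbs $V$ since $|V|\lesssim \langle z\rangle^{-2}$. To close the argument I need a matching inhomogeneous local smoothing bound, which will come from uniform resolvent estimates of the form $\|\langle z\rangle^{-1} R_{\mathcal{L}_V}(\lambda\pm i0)\langle z\rangle^{-1}\|_{L^2\to L^2}=O(\lambda^{-1})$. These I obtain in two regimes: at high frequency via the Vasy--Wunsch propagation-of-singularities machinery together with the Bony--H\"afner commutator method, which only requires polynomial decay and therefore accommodates $V=O(x^2)$; at low frequency via the Guillarmou--Hassell--Sikora expansion of \cite{GHS1}, which under the strict positivity \eqref{A2} and absence of zero resonance \eqref{A3} still yields a uniform $\lambda^{-1}$ bound on the spectral measure.

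For part (ii) the low-frequency analysis is unchanged: trapping is a high-frequency phenomenon and the HZ low-energy estimates apply verbatim. At high frequency the geodesic flow carries a mild hyperbolic trapped set as in \cite{BGH}, so the non-trapping high-frequency resolvent bound must be replaced by the logarithmically lossy version proved there. Following \cite{BM}, the logarithmic loss is compensated for by a semiclassical cutoff $\varphi(h^2\mathcal{L}_V)$ combined with a $TT^*$ argument and a Christ--Kiselev lemma step that only closes for $q>2$, which explains the loss of the endpoint. The potential is then handled by the same Rodnianski--Schlag perturbation scheme; the stronger decay $V=O(\langle z\rangle^{-2-\epsilon})$ is required so that the extra $\epsilon$-decay kills the residual log-factor coming from trapping before it is fed into the Duhamel iteration.

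The two main obstacles I anticipate are the following. First, in part (i), verifying the low-frequency resolvent bound when $V_0$ is permitted to take negative values in $(-(n-2)^2/4, 0)$: the Hankel-transform-type construction of \cite{GHS1}, originally set up for non-negative potentials, must be re-examined to confirm that the strict inequality in \eqref{A2} together with \eqref{A3} is enough to produce the same weighted $L^2$ estimate on the resolvent. Second, in part (ii), gluing the high- and low-frequency Strichartz contributions uniformly in $t\in\mathbb{R}$: the BGH-type microlocal machinery naturally produces time-localized inequalities, and extending them globally without reintroducing a loss at $q=2$ is precisely the technical point that forces the restriction $q>2$ and will be the heart of the argument.
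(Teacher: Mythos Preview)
Your overall strategy---Rodnianski--Schlag perturbation via Duhamel, reducing to global-in-time local smoothing for $e^{it\mathcal{L}_V}$, and deriving that from uniform weighted resolvent bounds---matches the paper. But there are three concrete problems.

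First, you have the resolvent tools assigned to the wrong frequency regimes. The Vasy--Wunsch paper \cite{VW} is about positive commutators \emph{at the bottom of the spectrum}; together with the Bony--H\"afner Mourre-theoretic argument \cite{BH} it is precisely what the paper uses for the \emph{low}-energy resolvent bound (Proposition~\ref{l-res-est}). It is not a propagation-of-singularities result and does not treat the semiclassical regime. High frequency is handled instead by Vasy--Zworski \cite{VZ} in the nontrapping case and by Nonnenmacher--Zworski/Datchev/Datchev--Vasy in the trapped case. Your plan to use the \cite{GHS1} expansion at low energy would require re-running their spectral-measure construction under the weaker hypothesis $V=O(x^2)$, which is exactly the complication the paper avoids by going through Mourre theory.

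Second, and more seriously, you have missed the mechanism that makes $V=O(x^2)$ (rather than $O(x^3)$) work in part~(i). After Duhamel the source $V e^{is\mathcal{L}_V}u_0$ must sit in $L^2_t L^{2n/(n+2)}_z$. Writing $V=(\langle z\rangle V)\cdot\langle z\rangle^{-1}$ and using local smoothing for the second factor, you need $\langle z\rangle V\in L^n$. But if $V$ is only $O(\langle z\rangle^{-2})$ then $\langle z\rangle V=O(\langle z\rangle^{-1})\notin L^n(X)$---the integral diverges logarithmically. It is only in $L^{n,\infty}$. The paper's fix (Lemma~\ref{Stri-Lor}) is that the Keel--Tao Strichartz estimates for $\mathcal{L}_0$ in the nontrapping case actually hold in the Lorentz space $L^{r,2}$, so H\"older in Lorentz spaces closes the loop with $\langle z\rangle V\in L^{n,\infty}$. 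Without this Lorentz refinement your argument for (i) does not close at the borderline decay.

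Third, your explanation for the extra $\epsilon$ in part~(ii) is wrong. It has nothing to do with absorbing the logarithmic loss from trapping; that loss is fully digested inside the $V=0$ Strichartz estimate via the \cite{BGH} time-slicing and never meets the potential. The real reason is that in the trapped case the paper does not obtain the Strichartz estimate for $\mathcal{L}_0$ in Lorentz spaces (the argument goes through \cite{BGH} rather than through a dispersive bound plus Keel--Tao), so one is forced back to genuine $L^n$ and hence needs $\langle z\rangle V\in L^n$, i.e.\ $V=O(\langle z\rangle^{-2-\epsilon})$.
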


\begin{remark} Compared with our previous result \cite{HZ}, the first result is new for the requirement on the decay of the potential $V$. The second result is same as \cite{BM} but with a short range potential.
\end{remark}

Our next result is about the inhomogeneous Strichartz estimate including the double endpoint case $q=\tilde{q}=2$ under the non-trapping assumption.
\begin{theorem}\label{thm-in} Let $(X^\circ,g)$ and $\mathcal{L}_V=\Delta_g+V$  be in the first case of  Theorem \ref{thm}, i.e. the non-trapping case,
 then the inhomogeneous Strichartz estimate holds for all admissible pair $(q,r), (\tilde{q}, \tilde{r})\in [2,\infty]^2$, i.e. satisfying \eqref{admissible}
\begin{equation}\label{Str-est-in}
\left\|\int_0^t  e^{i(t-s)\mathcal{L}_V}F(s) ds\right\|_{L^q_tL^r_z(\mathbb{R}\times X^\circ)}\leq
C\|F\|_{L^{\tilde{q}'}_tL^{\tilde{r}'}_z(\mathbb{R}\times X^\circ)}.
\end{equation}
\end{theorem}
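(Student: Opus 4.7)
The plan is to split the admissible range into two regimes, the non-endpoint cases and the double endpoint $q = \tilde{q} = 2$, and handle each with different tools, since the relaxation to $V = O(x^2)$ has killed the long-time dispersive estimate and forbids a direct invocation of Keel-Tao.

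\textbf{Step 1: Non-endpoint cases.} When $\min(q,\tilde{q}) > 2$, I would deduce \eqref{Str-est-in} from the homogeneous estimate \eqref{Str-est-n} of Theorem \ref{thm}(i). A standard $TT^*$ argument turns \eqref{Str-est-n} into the untruncated bilinear inequality
\begin{equation*}
\Big\|\int_{\mathbb{R}} e^{i(t-s)\mathcal{L}_V} F(s)\,ds\Big\|_{L^q_t L^r_z(\mathbb{R}\times X^\circ)} \leq C\|F\|_{L^{\tilde{q}'}_t L^{\tilde{r}'}_z(\mathbb{R}\times X^\circ)}.
\end{equation*}
Since $\tilde{q}' > q$ under the non-endpoint hypothesis, the Christ-Kiselev lemma then reinstates the time cutoff $\{s < t\}$, producing \eqref{Str-est-in} in this regime.

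\textbf{Step 2: The double endpoint.} For $q = \tilde{q} = 2$, Christ-Kiselev is unavailable, and, as already explained in the introduction, the fact that $V$ is only $O(x^2)$ destroys the $L^1 \to L^\infty$ bound for $e^{it\mathcal{L}_V}$ at large $|t-s|$, so the abstract Keel-Tao bilinear interpolation cannot be used on the perturbed propagator. Instead I would follow the Rodnianski-Schlag perturbation scheme used in \cite{BPST}. Three ingredients enter: (a) the double endpoint inhomogeneous Strichartz estimate for the unperturbed Laplacian $\Delta_g$, which is the $V \equiv 0$ specialization of \eqref{in-S} and hence is contained in \cite{HZ}; (b) the global-in-time Kato-type local smoothing estimate
\begin{equation*}
\|\langle z\rangle^{-1/2-\epsilon}\, e^{it\mathcal{L}_V} u_0\|_{L^2_t L^2_z(\mathbb{R}\times X^\circ)} \leq C\|u_0\|_{L^2(X^\circ)},
\end{equation*}
whose counterpart for $\Delta_g$ holds as well, both flowing from the uniform weighted resolvent estimates obtained in the paper via the Vasy-Wunsch and Bony-H\"afner machinery; and (c) the Duhamel identity
\begin{equation*}
\int_0^t e^{i(t-s)\mathcal{L}_V} F(s)\,ds = \int_0^t e^{i(t-s)\Delta_g} F(s)\,ds + i\int_0^t e^{i(t-s)\Delta_g}\, V \int_0^s e^{i(s-\sigma)\mathcal{L}_V} F(\sigma)\,d\sigma\,ds,
\end{equation*}
which reduces the problem to controlling the perturbative correction. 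Because $|V(z)|\leq C\langle z\rangle^{-2}$ by \eqref{A1}, the multiplication by $V$ can be factored as $\langle z\rangle^{-1/2-\epsilon}\cdot (\langle z\rangle^{1+2\epsilon}V)\cdot \langle z\rangle^{-1/2-\epsilon}$ with bounded middle factor, so that pairing the free inhomogeneous Strichartz on the outer integral with the local smoothing estimate on the inner integral closes the bound at the double endpoint.

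\textbf{Main obstacle.} The hardest point is the borderline nature of the closing step: the decay rate $\langle z\rangle^{-2}$ of $V$ exactly matches the product of two smoothing weights $\langle z\rangle^{-1/2-\epsilon}\cdot\langle z\rangle^{-1/2-\epsilon}$, with no margin to absorb any loss. Making the perturbation series converge (rather than diverge logarithmically) requires sharp resolvent bounds both at high frequency and at zero energy. The non-trapping hypothesis is decisive at high frequency, as it rules out the logarithmic defects that a trapped set would force, while \eqref{A2}--\eqref{A3} are what eliminate a zero-resonance obstruction at the bottom of the spectrum; without either of these, the double endpoint inhomogeneous estimate would break down.
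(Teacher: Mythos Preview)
Your Step 1 matches the paper (aside from a slip: Christ--Kiselev needs $\tilde q' < q$, not $\tilde q' > q$; note also that the condition $\min(q,\tilde q)>2$ is stricter than necessary, since $q>\tilde q'$ already covers every case except $q=\tilde q=2$).

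Step 2 has a genuine gap. With the first-order Duhamel identity you wrote, controlling $\mathcal N_0(V\mathcal N F)$ in $L^2_t L^{2n/(n-2)}_z$ by $\|F\|_{L^2_t L^{2n/(n+2)}_z}$ requires a \emph{truncated} mixed smoothing--Strichartz estimate such as $\|\langle z\rangle^{-\alpha}\mathcal N F\|_{L^2_t L^2_z}\lesssim \|F\|_{L^2_t L^{2n/(n+2)}_z}$ for the perturbed flow, together with its dual for $\mathcal N_0$. Your ingredient (b) is only the \emph{homogeneous} smoothing; at the double endpoint it does not upgrade to the truncated inhomogeneous statement (Christ--Kiselev is blocked, and $TT^*$ gives only the untruncated bilinear bound), so the argument is circular as written. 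Moreover, the factorization with weights $\langle z\rangle^{-1/2-\epsilon}$ does not mesh with the Lebesgue exponents: to pass from $L^2_z$ to $L^{2n/(n+2)}_z$ by H\"older one needs a multiplier in $L^{n,\infty}_z$, i.e.\ decay like $\langle z\rangle^{-1}$, not $\langle z\rangle^{-1/2-\epsilon}$.

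The paper avoids both issues by iterating Duhamel once more, arriving at
\[
\mathcal N F \;=\; \mathcal N_0 F \;-\; i\,\mathcal N_0 V \mathcal N_0 F \;-\; \mathcal N_0 V \mathcal N V \mathcal N_0 F,
\]
so that every outer operator is the free $\mathcal N_0$ and the perturbed $\mathcal N$ appears only sandwiched as $V\mathcal N V$. The three terms are then handled by: (i) the Lorentz-space endpoint Strichartz for $\mathcal L_0$ (Lemma~\ref{Stri-Lor}), which absorbs the borderline $V\in L^{n/2,\infty}$ and $\langle z\rangle V\in L^{n,\infty}$ via Lorentz--H\"older; and (ii) the \emph{inhomogeneous} supersmoothing bound $\|\langle z\rangle^{-1}\mathcal N\langle z\rangle^{-1}\|_{L^2_tL^2\to L^2_tL^2}\le C$, which follows from the resolvent estimate of Proposition~\ref{est:resovent} through D'Ancona's result (Theorem~\ref{in-sm}). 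The weight is $\langle z\rangle^{-1}$ on each side and the middle factor $\langle z\rangle^{2}V$ is bounded precisely because $V=O(x^2)$. No infinite perturbation series is summed, so the ``convergence'' concern you raise does not arise.
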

The non-double-endpoint inhomogeneous Strichartz estimate, namely $q>\tilde{q}'$, can be obtained by the the Christ-Kiselev lemma \cite{CK} as usual.
The double-endpoint estimate can be established through the ideas of \cite{Da} and \cite{BM1}.\vspace{0.2cm}

Now we introduce some notation. We use $A\lesssim B$ to denote
$A\leq CB$ for some large constant C which may vary from line to
line and depend on various parameters, and similarly we use $A\ll B$
to denote $A\leq C^{-1} B$. We employ $A\sim B$ when $A\lesssim
B\lesssim A$. If the constant $C$ depends on a special parameter
other than the above, we shall denote it explicitly by subscripts.
For instance, $C_\epsilon$ should be understood as a positive
constant not only depending on $p, q, n$, and $M$, but also on
$\epsilon$. Throughout this paper, pairs of conjugate indices are
written as $p, p'$, where $\frac{1}p+\frac1{p'}=1$ with $1\leq
p\leq\infty$.
\vspace{0.2cm}

This paper is organized as follows: In Section 2, we recall the b-geometry,  sc-geometry and the results in \cite{VW}.
Section 3 is devoted to the proof  of the first part result and we prove the Strichartz estimate on the manifold with mild trapped set in Section 4. Section 5 proves the
inhomogeneous Strichartz estimate in Theorem \ref{thm-in}. In the final appendix section, we record the
Kato smooth theorem and the Mourre theory for convenience.\vspace{0.2cm}

{\bf Acknowledgments:}\quad  The authors would like to thank Andrew Hassell and Andras Vasy for their
helpful discussions and encouragement. The first author is grateful for the hospitality of the Australian National University and
Stanford University, where the project was initiated and finished. The first author thanks Prof. Haruya Mizutani for his invaluable comments about the inhomogeneous estimate.
J. Zhang was supported by National Natural
Science Foundation of China (11401024), and China Scholarship Council and J. Zheng was supported by the European
Research Council, ERC-2014-CoG, project number 646650 Singwave.\vspace{0.2cm}

\section{Preliminaries: boundary and scattering geometry}

Let $X$ (the compactification of $X^\circ$) be a complete compact manifold with boundary $Y=\partial X$, we briefly recall the basic definitions of the boundary (b-) and scattering (sc-) structures on our setting $X$.
Recall the function $x$ is a boundary defining function, let $\dot{\mathcal{C}}^\infty(X)=\cap_k x^k\mathcal{C}^\infty(X)$  (called the set of Schwartz functions)  dentoe
smooth functions on $X$ vanishing to infinite order at $\partial X$. The dual of $\dot{\mathcal{C}}^\infty(X)$ is tempered distributional densities $\mathcal{C}^{-\infty}(X;\Omega X)$; and
tempered distributions $\mathcal{C}^{-\infty}(X)$ are elements of the dual Schwartz densities $\dot{\mathcal{C}}^{\infty}(X;\Omega X)$.

\begin{definition}[ b-vector fields and scattering vector fields] Define $\mathcal{V}_b(X)$ to be the Lie algebra of all smooth vector fields on $X$ which are tangent to the boundary
and the Lie algebra of  scattering vector fields is defined as $\mathcal{V}_{\scc}(X)=x\mathcal{V}_b(X)$.
\end{definition}

More precisely, these b-vector field can be realized as the sections of a
vector bundle $\leftidx{^{\bb}}{TX}$, called the b-tangent bundle.
That means $\mathcal{V}_{\bb}(X)=\mathcal{C}^\infty(X;\leftidx{^{\bb}}{TX})$,
i.e. $\mathcal{V}_{\bb}(X)$ is a space of sections of
$\leftidx{^{\bb}}{TX}$ the b-tangent bundle over $X$.  Using above notation in which
$x$ is the boundary defining function of $X$ and $y$ are coordinates
in $\partial X$, we have
\begin{equation*}\mathcal{V}_{\bb}(X)=\begin{cases}\mathcal{V}, ~\text{i.e. all $\mathcal{C}^\infty$-vector
fields}, &\text{in the interior} ~X;\\
\text{span}\{x\partial_x, \partial_{y_1}\cdots,
\partial_{y_{n-1}}\}, & \text{near the boundary} ~\partial X.
\end{cases}
\end{equation*}
We denote by $\text{Diff}_{\bb}^*(X)$ the `enveloping algebra' of $\mathcal{V}_{\bb}(X)$, meaning the ring of differential operator on $\mathcal{C}^\infty(X)$ generated by $\mathcal{V}_{\bb}(X)$ and $\mathcal{C}^\infty(X)$.
In particular, near the boundary $\partial X$, the $m$-order scattering differential operator is given by
\begin{equation*}\text{Diff}_{\bb}^m(X)=\Big\{A: A=\sum_{j+|\alpha|\leq
m}a_{j\alpha}(x,y)(x\partial_x)^j(\partial_y)^\alpha,
a_{j\alpha}\in \mathcal{C}^\infty(X) \Big\}.
\end{equation*}
The dual bundles of $\leftidx{^{\bb}}{TX}$ is $\leftidx{^{\bb}}{T^*X}$ with local base $$\frac{dx}{x},dy_1,\cdots, dy_{n-1}.$$
The b-density bundle is $$\nu_\bb=|\frac{dx}{x}dy_1\cdots dy_{n-1}|.$$ About the sc-vector field, we similarly have
\begin{equation*}\mathcal{V}_{\scc}(X)=\begin{cases}\mathcal{V}, ~\text{i.e. all $\mathcal{C}^\infty$-vector
fields}, &\text{in the interior} ~X;\\
\text{span}\{x^2\partial_x, x\partial_{y_1}\cdots,
x\partial_{y_{n-1}}\}, & \text{near the boundary} ~\partial X.
\end{cases}
\end{equation*}
and define the $m$-order scattering differential operator
\begin{equation*}\text{Diff}_{\text{sc}}^m(X)=\Big\{A: A=\sum_{j+|\alpha|\leq
m}a_{j\alpha}(x,y)(x^2\partial_x)^j(x\partial_y)^\alpha,
a_{j\alpha}\in \mathcal{C}^\infty(X) \Big\}.
\end{equation*}
Locally near the boundary, in the coordinate $(x,y)$, we have
\begin{equation*}\leftidx{^{\scc}}{T^*X}=
\text{span}\Big\{\frac{\mathrm{d}x}{x^2},
\frac{\mathrm{d}{y}}{x}\Big\}=\text{span}\Big\{\mathrm{d}\big(\frac1x\big),
\frac{\mathrm{d}{y}}{x}\Big\}.
\end{equation*}
The sc-density bundle is $$\nu_\scc=\left|\frac{dx}{x^2}\frac{dy_1\cdots dy_{n-1}}{x^{n-1}}\right|.$$
Fixing a volume $\bb$- or $\scc$-density $\nu_\bb$ or $\nu_{\scc}$ on $X$, we respectively define
$L^2_{\bb}(X)$ or  $L^2_{\scc}(X)$ to be the metric space $L^2(X;\nu_\bb)$ and $L^2(X;\nu_\scc)$.  So $L^2_{\scc}(X)=x^{\frac n2}L^2_\bb(X)$. Without confusing, we
write $L^2_{\scc}(X)$ to $L^2(X)$. \vspace{0.2cm}

Our setting is the scattering manifold in which the metric $g$ is a scattering metric, then the Laplacian $\Delta_g\in \text{Diff}_{\scc}^2(X)$
and we can write $\Delta_g=x^2 P_b$ where $P_b\in \text{Diff}_{\bb}^2(X)$. More explicitly, in local coordinates $(x,y)$ on a collar neighborhood of $\partial X$,
we have by Melrose \cite[Proof of Lemma 3]{Melrose}
$$P_b=D_x x^2 D_x+i(n-1)xD_x+\Delta_h+x^\rho R, \quad\rho>0,~ R\in\text{Diff}_{\bb}^2(X).$$

We recall  \cite[Proposition 4.5]{VW} here
\begin{lemma}\label{lem: bound} Let $Q\in x^{1+s}\mathrm{Diff}^1_b(X)$, $0\leq s<1/2$, the following estimate holds for all $f\in L_\scc^2(X)$ and $\mathrm{Im} \sigma\neq 0$
\begin{equation}
\|Q(2^j\mathcal{L}_V-\sigma)^{-1}f\|_{L_\scc^2(X)}\lesssim 2^{-(1+s)j/2}\left(\frac{\sigma}{\mathrm{Im} \sigma}\right)^{\frac{1+s}2} |\mathrm{Im}\sigma|^{-(1-s)/2}\|f\|_{L_\scc^2(X)}.
\end{equation}
\end{lemma}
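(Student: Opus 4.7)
The plan is to follow the Vasy--Wunsch approach (Proposition 4.5 of the paper cited in the statement): reduce via rescaling to a scale-invariant statement, convert the derivative content of $Q$ into a weighted $L^2$ resolvent bound using b-elliptic regularity, and then apply the positive-commutator / Mourre theory machinery to the weighted resolvent of $\mathcal{L}_V$.

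\emph{Rescaling.} Setting $\tau=2^{-j}\sigma$, we have $(2^j\mathcal{L}_V-\sigma)^{-1}=2^{-j}R(\tau)$ with $R(\tau):=(\mathcal{L}_V-\tau)^{-1}$ and $\mathrm{Im}\,\tau=2^{-j}\mathrm{Im}\,\sigma$. A direct substitution shows the claim is equivalent to the scale-invariant bound
$$
\|QR(\tau)f\|_{L^2_\scc(X)}\lesssim |\tau|^{(1+s)/2}|\mathrm{Im}\,\tau|^{-1}\|f\|_{L^2_\scc(X)},\qquad \tau\in\mathbb{C}\setminus\mathbb{R},
$$
which is what remains to be proved.

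\emph{b-elliptic regularity.} Write $Q=x^{1+s}Q'$ with $Q'\in\mathrm{Diff}^1_b(X)$, and use $\mathcal{L}_V=x^2P_b$ with $P_b\in\mathrm{Diff}^2_b(X)$ as recalled in the preliminaries. The standard integration by parts against $x^{2(1+s)}u$ (absorbing the commutator $[x^{1+s},\mathcal{L}_V]\in x^{2+s}\mathrm{Diff}^1_b$ into lower-order terms controlled by \eqref{A1}) yields
$$
\|Qu\|^2_{L^2_\scc(X)}\lesssim |\tau|\,\|x^{s}u\|^2_{L^2_\scc(X)}+\|x^{s}u\|_{L^2_\scc(X)}\|x^{s}f\|_{L^2_\scc(X)}+\text{lower order},
$$
whenever $(\mathcal{L}_V-\tau)u=f$. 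This reduces matters to a one-sided weighted $L^2$ bound on $R(\tau)$ of the form $\|x^{s}R(\tau)\|_{L^2_\scc\to L^2_\scc}\lesssim |\tau|^{s/2}|\mathrm{Im}\,\tau|^{-1}$ for $0\le s<1/2$.

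\emph{Weighted resolvent via Mourre, and the main obstacle.} The remaining weighted resolvent bound is obtained by a positive commutator argument. Using a conjugate operator modeled on $\tfrac12(xD_x+D_xx)$ together with the Mourre inequality available for $\mathcal{L}_V$ under \eqref{A2}--\eqref{A3}, Vasy--Wunsch obtain the limiting absorption bound $\|x^{1/2}R(\tau)x^{1/2}\|_{L^2_\scc\to L^2_\scc}\lesssim |\tau|^{-1/2}$ uniformly in non-real $\tau$; interpolating with the trivial bound $\|R(\tau)\|\le|\mathrm{Im}\,\tau|^{-1}$ then produces the required dependence on $|\tau|$ and $|\mathrm{Im}\,\tau|$. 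This last step is the main obstacle: the Mourre estimate must hold uniformly across the spectrum, which requires the nontrapping (or mild-trapping) geometric hypothesis at high energy and the strict positivity \eqref{A2} together with \eqref{A3} (no embedded eigenvalues, zero eigenvalue, or zero resonance) at low energy. The restriction $s<1/2$ is dictated by the Agmon threshold: $x^{1/2}$ is the critical weight for polynomial positive commutants, and going beyond it would require switching to exponential Agmon weights.
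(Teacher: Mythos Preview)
Your rescaling step is correct, and your b-elliptic reduction to a one-sided weighted bound $\|x^{s}R(\tau)\|_{L^2_\scc\to L^2_\scc}\lesssim |\tau|^{s/2}|\mathrm{Im}\,\tau|^{-1}$ is the right intermediate target. The problem is the way you propose to obtain that last bound.

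The estimate you are after \emph{blows up} like $|\mathrm{Im}\,\tau|^{-1}$ as $\mathrm{Im}\,\tau\to 0$; it is much weaker than a limiting absorption principle, and no Mourre theory is needed (or appropriate) here. In the Vasy--Wunsch paper, Proposition~4.5 sits in Section~4, \emph{before} the Mourre estimate of Section~5, and is in fact one of the inputs used to verify the hypotheses of the Mourre machinery (this is exactly how the present paper uses it: Lemma~\ref{lem: bound} feeds into the proof of \eqref{est:b1}, while Lemma~\ref{lem:mou} supplies the Mourre input separately). So invoking Mourre/LAP at this stage is circular. Moreover, your interpolation does not work as written: from the two-sided bound $\|x^{1/2}R(\tau)x^{1/2}\|\lesssim|\tau|^{-1/2}$ and the unweighted bound $\|R(\tau)\|\le|\mathrm{Im}\,\tau|^{-1}$ you can only interpolate to two-sided estimates $\|x^{\theta/2}R(\tau)x^{\theta/2}\|$, not the one-sided $\|x^{s}R(\tau)\|$ you need; even the powers of $|\tau|$ come out with the wrong sign. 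Finally, your claim that the ``main obstacle'' is a uniform Mourre estimate requiring the nontrapping hypothesis is off the mark: since the bound is allowed to blow up on the real axis, no dynamical hypothesis enters.

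The correct substitute for your third step is the Hardy-type inequality $\|xu\|^2_{L^2_\scc}\lesssim\langle\mathcal{L}_V u,u\rangle$, valid under \eqref{A2}. Applied to $u=R(\tau)f$ and combined with $\mathcal{L}_V R(\tau)=\mathrm{Id}+\tau R(\tau)$ and the trivial bound $\|R(\tau)\|\le|\mathrm{Im}\,\tau|^{-1}$, this gives $\|xR(\tau)f\|\lesssim|\tau|^{1/2}|\mathrm{Im}\,\tau|^{-1}\|f\|$. Then H\"older's inequality $\|x^{s}u\|\le\|u\|^{1-s}\|xu\|^{s}$ (valid because $x$ is bounded) interpolates to the desired one-sided bound $\|x^{s}R(\tau)\|\lesssim|\tau|^{s/2}|\mathrm{Im}\,\tau|^{-1}$, which then feeds into your b-elliptic step exactly as you wrote. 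This is the elementary route taken in \cite{VW}; the paper itself simply records Lemma~\ref{lem: bound} as a citation of \cite[Proposition~4.5]{VW} without further argument.
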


 \begin{lemma}\label{lem:mou} Let $A=-\frac12\left((\phi xD_x)+(\phi xD_x)^*\right)$ where $\phi\in \mathcal{C}_c^\infty(X)$ supported in a collar
neighborhood of $\partial X$ and be identically $1$ near $\partial X$. Define $A_j=\Phi(2^j\mathcal{L}_V) A\Phi(2^j\mathcal{L}_V)$ where $\Phi \in\mathcal{C}_c^\infty([\frac12,2])$.
The we have the following uniform estimates for $j\geq 0$
\begin{gather}
\|[A_j, (2^j\mathcal{L}_V)^{1/2}]\|_{L^2\to L^2}\lesssim 1,\\
\|[A_j, [A_j, (2^j\mathcal{L}_V)^{1/2}]]\|_{L^2\to L^2}\lesssim 1,\\
\||A_j|^\mu x^\mu\|_{L^2\to L^2}\lesssim 2^{-\mu j/2}, \quad \mu\in [0,1],\\
\|\langle A_j\rangle^\mu \Phi(2^j\mathcal{L}_V) x^\mu\|_{L^2\to L^2} \lesssim 2^{-\mu j/2}, \quad \mu\in [0,1].
\end{gather}
and the Mourre estimate
\begin{equation}
\chi_{I}(2^j\mathcal{L}_V)\frac{i}2[(2^j\mathcal{L}_V)^{1/2}, A_j]\chi_{I}(2^j\mathcal{L}_V)\geq C\chi_{I}(2^j\mathcal{L}_V).
\end{equation}
where $\chi_I$ is the characteristic function of the compact interval $I\subset(0,\infty)$.
\end{lemma}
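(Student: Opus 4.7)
The strategy is to work throughout in the scattering and b-calculi and exploit the structural identity $\mathcal{L}_V = x^2 P_b$ with $P_b\in\mathrm{Diff}_b^2(X)$ self-adjoint (the potential $V\in x^2 C^\infty(X)$ is absorbed into $P_b$ thanks to assumption \eqref{A1}). On the spectral range cut out by $\Phi(2^j\mathcal{L}_V)$ one has $\mathcal{L}_V\sim 2^{-j}$, so heuristically $x\sim 2^{-j/2}$ at the boundary, and every bound in the lemma will be an instance of this scaling combined with functional calculus and the resolvent estimate of Lemma~\ref{lem: bound}.

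For the single commutator bound, I would compute $[A,\mathcal{L}_V]$ in local coordinates near $\partial X$ using the explicit formula $P_b=D_xx^2D_x+i(n-1)xD_x+\Delta_h+x^\rho R$ recalled above. Since $A\equiv -\phi\, xD_x$ modulo bounded terms and $\mathcal{L}_V=x^2P_b$, a short computation shows $[A,\mathcal{L}_V]\in x^2\mathrm{Diff}_b^2(X)+x\cdot(\text{bounded})$, i.e.\ of the same order as $\mathcal{L}_V$ itself. Applying the Helffer-Sj\"ostrand functional calculus to $\sqrt{\mathcal{L}_V}$, the commutator $[A,\sqrt{\mathcal{L}_V}]$ is expressed as a contour integral involving resolvents, which via Lemma~\ref{lem: bound} and the two sandwiching cutoffs $\Phi(2^j\mathcal{L}_V)$ yields the uniform-in-$j$ bound (2.4); the two cutoffs absorb the $2^j$ rescaling. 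The double commutator (2.5) is then obtained by iterating the same argument, using that $[A,[A,\mathcal{L}_V]]$ remains in $x^2\mathrm{Diff}_b^2(X)$ plus acceptable errors.

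The weighted bounds (2.6)-(2.7) reduce by complex interpolation between $\mu=0$ (trivial) and $\mu=1$. The essential content at $\mu=1$ is $\|x\,\Phi(2^j\mathcal{L}_V)\|_{L^2\to L^2}\lesssim 2^{-j/2}$, which follows from $\mathcal{L}_V=x^2P_b$ with $P_b$ elliptic and positive near the boundary: on the range of $\Phi(2^j\mathcal{L}_V)$ one has $\|xf\|^2_{L^2_\scc}\sim \|P_b^{-1/2}\mathcal{L}_V^{1/2}f\|^2\lesssim 2^{-j}\|f\|^2$. The operator $A_j$ is uniformly bounded on $L^2$, being a doubly cut-off symmetric operator, so commuting $x$ past the cutoffs produces only lower-order tail contributions. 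This gives (2.6), and (2.7) follows by combining (2.6) with the same spectral localization argument through $\langle A_j\rangle^\mu=(1+|A_j|^2)^{\mu/2}$.

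The Mourre estimate (2.8) is the heart of the lemma and where I expect the main obstacle. Writing $[(2^j\mathcal{L}_V)^{1/2},A_j]$ as $\tfrac12(2^j\mathcal{L}_V)^{-1/2}[2^j\mathcal{L}_V,A_j]$ plus commutator errors controlled by Helffer-Sj\"ostrand, the problem reduces to a positive lower bound for the principal part of $\tfrac{i}{2}[\mathcal{L}_V,A]$. In local coordinates near $\partial X$ this principal part equals $2\phi(x^2D_x)^2+(\text{lower order})$, a non-negative multiple of the radial piece of $\mathcal{L}_V$. Combined with $\phi\equiv 1$ near $\partial X$ and a standard microlocal partition separating radial from tangential frequencies, one obtains $\tfrac{i}{2}[\mathcal{L}_V,A]\geq c\mathcal{L}_V-K$ for some $c>0$ and a compact operator $K$. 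Sandwiching with $\chi_I(2^j\mathcal{L}_V)$, whose support lies in a compact subinterval of $(0,\infty)$, absorbs both the compact error and the tangential defect. The delicate points are controlling the tangential $\Delta_h$ contribution and the error from the region where $\phi$ transitions from $1$ to $0$; both are handled by the Vasy-Wunsch scattering Mourre argument of \cite{VW}, which I would follow with minor modifications to accommodate the potential $V$ via assumptions \eqref{A1}-\eqref{A2}.
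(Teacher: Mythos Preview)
Your overall strategy aligns with the paper's in that both ultimately rest on Vasy--Wunsch \cite{VW}. The paper's proof, however, is simply a citation: it observes that the four operator bounds are exactly formulas (6.1)--(6.4) of \cite{VW} and the Mourre estimate is their Theorem~5.3, and then remarks that although \cite{VW} assumed $V\geq 0$ with $V=O(x^{2+\epsilon})$, their argument (as already indicated in footnotes there) goes through under the weaker hypotheses \eqref{A1}--\eqref{A2}. No independent argument for any of the five estimates is offered.

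Your sketch goes further and tries to outline the mechanism behind each bound, and for (2.4), (2.5) and the Mourre estimate this is broadly in the spirit of what \cite{VW} do. But there is a genuine error in your treatment of (2.6)--(2.7). You assert that ``$A_j$ is uniformly bounded on $L^2$, being a doubly cut-off symmetric operator.'' This is false: $A$ is the symmetrization of the b-vector field $\phi\,xD_x$, an unbounded self-adjoint operator---it is the \emph{conjugate} operator in the Mourre scheme, which is never expected to be bounded---and the spectral cutoffs $\Phi(2^j\mathcal{L}_V)$ do not change this. In the Euclidean model $A$ is the generator of dilations and $\Phi(2^j\mathcal{L}_V)$ is a Littlewood--Paley projector in frequency; the sandwiched operator is still a first-order radial $\xi$-derivative with coefficients supported on an annulus and is manifestly unbounded on $L^2$. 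Consequently your reduction of $\||A_j|\,x\|\lesssim 2^{-j/2}$ to the simpler bound $\|x\,\Phi(2^j\mathcal{L}_V)\|\lesssim 2^{-j/2}$ via ``$A_j$ bounded, then commute $x$ past the cutoffs'' does not go through. The actual argument in \cite{VW} for the analogues of (2.6)--(2.7) combines the weighted resolvent bound (their Proposition~4.5, recorded here as Lemma~\ref{lem: bound}) with Helffer--Sj\"ostrand commutator manipulations and interpolation, and never invokes boundedness of $A_j$.
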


\begin{proof}This directly follows from  the formulas (6.1)-(6.4) and Theorem 5.3 in \cite{VW}. Even though the operator $\mathcal{L}_V$ considered here
is a bit different from the operator $P=\Delta_g+V$ stated in Theorem 5.3 of \cite{VW} where $V\geq0$ and $V=O(x^{2+\epsilon})$ for $\epsilon>0$ as $x\to 0$,
but their argument can go through with minor modifications for $V>-c x^2$ such that $\Delta_{h} + V_0 + (n-2)^2/4$ is positive, for example $c<(n-2)^2/4$.
The minor modifications also have been indicated in footnotes in \cite{VW}.
\end{proof}

\section{Strichartz estimate for Schr\"odinger equation with $\mathcal{L}_V$}

In this section, we prove the first part of Theorem \ref{thm} under the non-trapping condition. The proof is based on the result in \cite{HZ}.

Consider $\mathcal{L}_0=\Delta_g$ without the potential, the operator $\mathcal{L}_0$ falls into the class of operator $\mathbf{H}$ considered in \cite{HZ}. Let us briefly recall the main strategy there.
To avoid the conjugate points, we  microlocalized (in phase space) propagators
$U_j(t)$ by
\begin{equation}\label{Uiti}
\begin{split}
U_j(t) = \int_0^\infty e^{it\lambda^2} Q_j(\lambda)
dE_{\sqrt{\mathbf{H}}}(\lambda), \quad 1 \leq j \leq N,
\end{split}\end{equation}
where $Q_j(\lambda)$ is a partition of the identity operator in
$L^2(X^\circ)$.
Then the operator $U_j(t) U_j(s)^*$
is given
\begin{equation}
U_j(t) U_j(s)^* =  \int e^{i(t-s)\lambda^2}  Q_j(\lambda)
dE_{\sqrt{\mathbf{H}}}(\lambda) Q_j(\lambda)^*.
\label{Uiti2}\end{equation}
We proved a uniform estimate on $\| U_j(t) \|_{L^2 \to L^2}$ in \cite[Section 5]{HZ} and dispersive estimate  in \cite[Section 6]{HZ}  for $U_j(t) U_j(s)^*$, the
homogeneous Strichartz estimate for $e^{it\mathbf{H}}$ finally was obtained by Keel-Tao's formalism \cite{KT} to each $U_j$ and summing over $j$. For
the endpoint inhomogeneous estimate, we required additional argument to obtain dispersive estimate on $U_i(t) U_j(s)^*$ for $i \neq j$ and the Keel-Tao's argument showed
the desirable endpoint inhomogeneous Strichartz estimate.\vspace{0.2cm}

As mentioned in \cite{KT}, the Strichartz estimates obtained by the abstract Keel-Tao's formalism can be sharped in Lorentz space norm $L^{r,2}(X)$.
More precisely, we have
\begin{lemma}\label{Stri-Lor} For any admissible pairs $(q,r)$ and $(\tilde{q},\tilde{r})$, the following Strichartz inequalities hold
\begin{equation}
\begin{split}
\|e^{it\mathcal{L}_0}u_0\|_{L^q_t(\R;L^{r,2}_z(X^\circ))}\leq C\|u_0\|_{L^2_z(X^\circ)}
\end{split}
\end{equation}
and
\begin{equation}
\begin{split}
\big\|\int_0^te^{i(t-s)\mathcal{L}_0}F(s)ds \big\|_{L^q_t(\R;L^{r,2}_z(X^\circ))}\leq C\|F\|_{L^{\tilde{q}'}_t(\R;L^{\tilde{r}',2}_z(X^\circ))}
\end{split}
\end{equation}
where $L^{r,2}$ is the Lorentz space on $X^\circ$.
\end{lemma}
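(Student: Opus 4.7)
The plan is to recycle the phase-space microlocalization and the $TT^*$ dispersive framework already set up for $\mathcal{L}_0=\Delta_g$ in \cite{HZ}, but feed it into the Lorentz-refined version of the Keel-Tao abstract machinery rather than the raw Lebesgue version. Concretely, recall that the propagator is decomposed as $e^{it\mathcal{L}_0}=\sum_{j=1}^N U_j(t)$ via \eqref{Uiti}, and in \cite[Sections 5-6]{HZ} the following two ingredients are established: (a) the uniform energy bound $\|U_j(t)\|_{L^2\to L^2}\lesssim 1$, and (b) the dispersive bound $\|U_i(t)U_j(s)^*\|_{L^1\to L^\infty}\lesssim |t-s|^{-n/2}$, valid for all pairs $i,j$. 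These are exactly the hypotheses of Keel-Tao, and in the present lemma we only need to upgrade the output from $L^r$ to $L^{r,2}$ on the spatial side.

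The upgrade proceeds in two steps. First, by real interpolation between the $L^2\to L^2$ bound and the $L^1\to L^\infty$ dispersive bound, using that $(L^1,L^2)_{\theta,2}=L^{r',2}$ and $(L^2,L^\infty)_{\theta,2}=L^{r,2}$ with $1/r=(1-\theta)/2$, one gets the Lorentz-valued dispersive estimate
\begin{equation*}
\|U_i(t)U_j(s)^*\|_{L^{r',2}\to L^{r,2}}\lesssim |t-s|^{-n(1/2-1/r)},
\end{equation*}
for every admissible exponent $r$. Second, one runs the $TT^*$ argument of Keel-Tao in its bilinear form: the convolution estimate
\begin{equation*}
\Bigl|\iint \langle U_i(t)U_j(s)^*F(s),G(t)\rangle\,ds\,dt\Bigr|\lesssim \iint |t-s|^{-n(1/2-1/r)}\|F(s)\|_{L^{\tilde r',2}}\|G(t)\|_{L^{r',2}}\,ds\,dt
\end{equation*}
is closed using Hardy-Littlewood-Sobolev in the time variable, which gives $L^{q}_t L^{r,2}_z$ on the left and $L^{\tilde q'}_t L^{\tilde r',2}_z$ on the right whenever $(q,r)$ and $(\tilde q,\tilde r)$ are admissible in the sense of \eqref{admissible}. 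The homogeneous estimate then follows by taking $F=G$, $i=j$, and summing the finite partition over $j=1,\dots,N$; the inhomogeneous estimate follows by retaining the cross-terms $i\ne j$ and using the dispersive bound stated above (which holds uniformly in $i,j$ thanks to the argument of \cite[Section 7]{HZ}).

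The endpoint $q=2$ (available when $n\ge 3$ for the admissible pair $(2,2n/(n-2))$) is the step that genuinely requires the Lorentz refinement, because the atomic/Whitney decomposition in Keel-Tao's proof of the endpoint closes only after one interpolates into $L^{r,2}$ rather than $L^r$; this is the standard Keel-Tao endpoint argument, transplanted verbatim to the microlocalized operators $U_i(t)U_j(s)^*$. Summation over the finitely many partition pieces preserves the Lorentz norm since $L^{r,2}$ is a Banach space for $1<r<\infty$ and the number of pieces $N$ is finite, independent of the data. The main obstacle I anticipate is purely bookkeeping: one must verify that the dispersive estimate for the off-diagonal $U_i(t)U_j(s)^*$ in \cite{HZ} was indeed proved with the full time decay $|t-s|^{-n/2}$ (and not merely in some weakened form) so that the real interpolation step is directly applicable; assuming that, no further analytic input beyond \cite{HZ} and the Lorentz-sharpened Keel-Tao formalism is needed.
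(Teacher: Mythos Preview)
Your proposal is correct and is essentially the paper's own argument spelled out in detail: the paper simply remarks that the Keel--Tao abstract formalism, fed with the energy and dispersive bounds for the microlocalized pieces $U_j(t)$ (and for $U_i(t)U_j(s)^*$ in the inhomogeneous case) established in \cite{HZ}, already outputs Lorentz-space Strichartz estimates, as noted in \cite{KT}. Your only flagged concern---that the off-diagonal dispersive bound in \cite{HZ} carries the full $|t-s|^{-n/2}$ decay---is indeed verified there, so nothing further is needed.
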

Following the method of Rodnianski-Schlag \cite{RS}, see also \cite{BPST}, the Strichartz estimate is a consequence of
the global-in-time local smoothing
\begin{equation}\label{est:local-smooth}
\begin{split}
\|\langle
z\rangle^{-1}e^{it\mathcal{L}_V}u_0\|_{L^{2}_t(\R;L^{2}_z(X^\circ))}\leq
C\|u_0\|_{L^2}.
\end{split}
\end{equation}
Indeed by Duhamel's formula, we have for any admissible pair $(q,r)$ with $r\geq2$
\begin{equation*}
\begin{split}
&\|e^{it\mathcal{L}_V}u_0\|_{L^q_tL^r_z}\lesssim \|e^{it\mathcal{L}_V}u_0\|_{L^q_tL^{r,2}_z}\\&\lesssim
\|e^{it\mathcal{L}_0}u_0\|_{L^q_tL^{r,2}_z}+\big\|\int_0^t
e^{i(t-s)\mathcal{L}_0}V(z)e^{is\mathcal{L}_V}u_0 ds\big\|_{L^q_tL^{r,2}_z}
\\&\lesssim
\|u_0\|_{L^2_z}+\|V(z)e^{is\mathcal{L}_V}u_0\|_{L^2_tL^{\frac{2n}{n+2},2}_z}
\\&\lesssim \|u_0\|_{L^2_z}+\|\langle z\rangle
V(z)\|_{L^{n,\infty}}\|\langle
z\rangle^{-1}e^{is\mathcal{L}_V}u_0\|_{L^{2}_sL^{2}_z}\lesssim \|u_0\|_{L^2_z}.
\end{split}
\end{equation*}
Now it suffices to show \eqref{est:local-smooth}, by Kato's smoothing theorem (e.g. Theorem \ref{kato-sm} below), which follows from the resolvent estimate
\begin{proposition}\label{est:resovent} There exists $\epsilon_0>0$ and a constant $C$ such that for all $\sigma\in\C$ satisfying $|\mathrm{Im}\sigma|\leq \epsilon_0$ we have

(i) if $(X^\circ,g)$ is nontrapping
\begin{equation}\label{resolvent-est1}
\begin{split}
\|\langle z\rangle^{-1}(\mathcal{L}_V-\sigma)^{-1}\langle
z\rangle^{-1}\|_{L^2\to L^2}\leq \frac{C}{\sqrt{1+|\sigma|}};
\end{split}
\end{equation}

(ii) if $(X^\circ,g)$ has the trapped set satisfying the assumptions of \cite{BGH}
\begin{equation}\label{resolvent-est2}
\begin{split}
\|\langle z\rangle^{-1}(\mathcal{L}_V-\sigma)^{-1}\langle
z\rangle^{-1}\|_{L^2\to L^2}\leq \frac{C}{\sqrt{1+\frac{|\sigma|}{\log(1+|\sigma|)}}}.
\end{split}
\end{equation}

\end{proposition}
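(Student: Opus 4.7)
The plan is to prove both resolvent bounds by a dyadic spectral decomposition combined with Mourre-type positive commutator estimates, handling low and high energies separately. Let $\Phi\in\mathcal{C}_c^\infty([1/2,2])$ generate a Littlewood--Paley partition of unity $1=\sum_{j\in\mathbb{Z}}\Phi(2^{-j}\tau)$ on $(0,\infty)$. Applied via functional calculus to $\mathcal{L}_V$, this gives $(\mathcal{L}_V-\sigma)^{-1}=\sum_j \Phi(2^{-j}\mathcal{L}_V)(\mathcal{L}_V-\sigma)^{-1}$. By almost orthogonality, only the dyadic levels with $2^j\sim\max(|\sigma|,1)$ contribute significantly; off-diagonal terms have supports bounded away from $\sigma$ and can be summed trivially.

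In the low-energy regime $|\sigma|\lesssim 1$, I would invoke Lemma \ref{lem:mou} directly. The Mourre inequality, combined with the single and double commutator bounds and the relative boundedness provided by Lemma \ref{lem: bound} (with parameter $s$ slightly positive), fits the hypotheses of the abstract Mourre/limiting absorption machinery recorded in the appendix. This yields
\begin{equation*}
\bigl\|\langle A_j\rangle^{-1}(2^j\mathcal{L}_V-\sigma')^{-1}\langle A_j\rangle^{-1}\bigr\|_{L^2\to L^2}\lesssim 1
\end{equation*}
uniformly in $\mathrm{Im}\,\sigma'\neq 0$ for $\sigma'$ in the Mourre window. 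Using the weight-exchange bound $\|\langle A_j\rangle^{\mu}\Phi(2^j\mathcal{L}_V)x^{\mu}\|_{L^2\to L^2}\lesssim 2^{-\mu j/2}$ from Lemma \ref{lem:mou} to replace $\langle A_j\rangle^{-1}$ by $2^{-j/2}\langle z\rangle^{-1}$, and then undoing the rescaling via $\sigma=2^{-j}\sigma'$, gives $\|\langle z\rangle^{-1}(\mathcal{L}_V-\sigma)^{-1}\langle z\rangle^{-1}\|_{L^2\to L^2}\lesssim 1$ uniformly across all low dyadic scales.

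In the high-energy regime $|\sigma|\gg 1$, I set $h=|\sigma|^{-1/2}$ and rescale to the semiclassical operator $h^2\mathcal{L}_V$, whose spectral parameter $h^2\sigma$ lies in a compact subset of $\mathbb{C}\setminus\{0\}$. Under the non-trapping assumption of (i), the standard semiclassical Mourre estimate (of the type used in \cite{VW,HZ}) gives
\begin{equation*}
\bigl\|\langle z\rangle^{-1}(h^2\mathcal{L}_V-z')^{-1}\langle z\rangle^{-1}\bigr\|_{L^2\to L^2}\lesssim h,
\end{equation*}
which after unscaling is precisely $C/\sqrt{|\sigma|}$, proving (i). Case (ii) follows by the same argument with the non-trapping hypothesis replaced by the mild-trapping assumption of \cite{BGH}; the high-energy estimate then loses a factor $\log(1/h)\sim\log(1+|\sigma|)^{1/2}$, accounting for the logarithmic correction in \eqref{resolvent-est2}.

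The main obstacle will be the low-energy analysis: the weakened decay assumption $V(x,y)=O(x^2)$ (instead of $O(x^3)$ as in \cite{HZ}) makes the commutator with $A_j$ only borderline controlled relative to $\mathcal{L}_V$, so the argument rests essentially on the sharp bound of Lemma \ref{lem: bound} with $s>0$, together with the positivity hypothesis \eqref{A2} on $\Delta_h+(n-2)^2/4+V_0$, which is what guarantees the Mourre positive commutator estimate uniformly down to zero energy. The assumption \eqref{A3} on absence of zero-resonance and non-positive eigenvalues is what permits the limiting-absorption extension of the resolvent to $\mathrm{Im}\,\sigma=0$, closing the estimate on the full strip $|\mathrm{Im}\,\sigma|\leq\epsilon_0$.
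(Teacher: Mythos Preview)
Your proposal is correct and follows essentially the same route as the paper: Mourre theory via Lemma~\ref{lem:mou} (with the weight-exchange bound $\|\langle A_j\rangle\Phi(2^j\mathcal{L}_V)x\|\lesssim 2^{-j/2}$) for the low-energy regime, and semiclassical resolvent bounds after the rescaling $h=|\sigma|^{-1/2}$ for the high-energy regime. The paper differs only cosmetically---it cites Melrose, Vasy--Zworski, and Nonnenmacher--Zworski/Datchev/Datchev--Vasy directly for the intermediate and high-energy estimates rather than rederiving them, and in the low-energy step it inserts an auxiliary commutator bound $\|\langle z\rangle^{-1}\varphi(2^j\mathcal{L}_V)\langle z\rangle\|\lesssim 1$ (via Helffer--Sj\"ostrand and Lemma~\ref{lem: bound}) and applies the Mourre machinery to $(2^j\mathcal{L}_V)^{1/2}$ rather than to $2^j\mathcal{L}_V$ itself.
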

\begin{remark}This resolvent is strong enough to obtain \eqref{est:local-smooth}. Indeed by Kato's smoothing, we only need
$$
\|\langle z\rangle^{-1}(\mathcal{L}_V-\sigma)^{-1}\langle
z\rangle^{-1}\|_{L^2\to L^2}\leq C.$$
The stronger statement at $|\sigma|\gg1$ actually gains $(Id+\mathcal{L}_V)^{1/4}$-derivatives than \eqref{est:local-smooth}.
When $\mathrm{Re} \sigma \in I\subset (0,\infty)$ in a compact set, this result is due to Melrose \cite{Melrose} and while $\mathrm{Re} \sigma \gg1$,
this is a direct consequence of Vasy-Zworski\cite{VZ} for non-trapping and  Nonnenmacher-Zworski\cite{NZ}, Datchev\cite{D} and Datchev-Vasy \cite{DV} for mild trapped case where the weight is $\langle
z\rangle^{-1/2-\delta}$ with $\delta>0$.

\end{remark}

\begin{proof} Let $\sigma=\lambda+i\epsilon$ with $\lambda,\epsilon\in\R$ with $|\epsilon|\leq \epsilon_0$. Since $\mathcal{L}_V$ is a non-negative operator,
the spectrum $\sigma(\mathcal{L}_V)\subset (0,\infty)$. By the functional calculus, it is easy to prove \eqref{resolvent-est1} and \eqref{resolvent-est2} when $\lambda<0$. We only consider $\lambda\geq0$.
If $\lambda \in I$ with $I \subset (0,\infty)$ being a compact set, this result is essentially due to Melrose \cite{Melrose} and while $\lambda\gg1$, the behavior of $\mathcal{L}_V-\lambda$ as $\lambda\to \infty$
is equivalent to the semiclassical operator $h^2\mathcal{L}_V-1$ as $h=\lambda^{-1/2}\to 0$,
this is a direct consequence of Vasy-Zworski\cite{VZ} and Datchev-Vasy \cite{DV}where the weight can be sharped to $\langle
z\rangle^{-1/2-\delta}$ with $\delta>0$. Hence it suffices to consider resolvent estimate at the low frequency $\lambda\lesssim 1$ which is given by Proposition \ref{l-res-est} below.

\end{proof}

\begin{proposition}[Resolvent estimate at low energy]\label{l-res-est}  Let $\mathcal{L}_V=\Delta_g+V$,
we have the following estimates, uniformly in $\sigma\in\C\setminus [0,+\infty)$ and $|\sigma|<1$,
\begin{equation}\label{res-est'}
\|\langle z\rangle^{-1}(\mathcal{L}_V-\sigma)^{-1} \langle z\rangle^{-1}\|\lesssim 1.
\end{equation}

\end{proposition}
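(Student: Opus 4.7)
The plan is to build directly on the low-energy resolvent construction of Guillarmou--Hassell--Sikora \cite{GHS1}, which is tailored to precisely the class of Schr\"odinger operators treated here. Under the hypotheses \eqref{A1}--\eqref{A3}, $\mathcal{L}_V$ fits the framework of \cite{GHS1}: the potential is exactly of order $x^2$ at $\partial X$, $V_0$ lies strictly above the threshold $-(n-2)^2/4$, and there is neither a zero eigenvalue nor a zero resonance. This is exactly the scenario in which the polyhomogeneous parametrix for $(\mathcal{L}_V - k^2)^{-1}$, $k = \sqrt{\sigma}$, is constructed on a blown-up double space.

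First I would split the low-energy region $\{\sigma \in \mathbb{C}\setminus[0,\infty) : |\sigma|<1\}$ into three subregions. For $\mathrm{Re}\,\sigma \leq 0$ the bound is immediate from positivity of $\mathcal{L}_V$ and self-adjointness, with no weights needed. For $\mathrm{Re}\,\sigma \in [\delta_0, 1]$ and $|\mathrm{Im}\,\sigma|\leq \epsilon_0$ with a small fixed $\delta_0 > 0$, the classical limiting absorption principle of Melrose \cite{Melrose} supplies a uniform bound on the weighted resolvent, using in addition that $\mathcal{L}_V$ has no embedded eigenvalues in $(0,\infty)$ (which is part of \eqref{A3}). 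The substantive range is therefore $|\sigma|\leq 2\delta_0$, where the zero-energy face must be analyzed carefully.

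For this critical range I would invoke \cite{GHS1}: the Schwartz kernel of $(\mathcal{L}_V - k^2)^{-1}$ lifts to a polyhomogeneous conormal distribution on $M^2_{k,b}$, with leading orders at each boundary face explicitly controlled in terms of the boundary Bessel index $\nu_0$ determined by $\Delta_h+(n-2)^2/4+V_0$. Assumption \eqref{A2} forces $\nu_0 > 0$, and \eqref{A3} ensures that the normal operator at the zero-energy face is invertible, so that $\mathcal{L}_V^{-1}$ exists as a bounded map $\langle z\rangle L^2 \to \langle z\rangle^{-1} L^2$; equivalently, $\langle z\rangle^{-1}\mathcal{L}_V^{-1}\langle z\rangle^{-1}$ is bounded on $L^2$. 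Combining this with the uniform kernel estimates across the $k=0$ face provided by the polyhomogeneous structure yields continuity of $\sigma \mapsto \langle z\rangle^{-1}(\mathcal{L}_V - \sigma)^{-1}\langle z\rangle^{-1}$ up to the boundary $\sigma = 0$ from each side of the real axis, whence the uniform bound \eqref{res-est'} on $|\sigma|\leq 2\delta_0$.

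The main obstacle is the borderline decay $V = O(x^2)$: the potential then contributes to the leading-order model operator at $\partial X$ rather than being a genuine lower-order perturbation, which is precisely why one must use \cite{GHS1} (valid for exact $x^2$ decay) instead of the simpler framework of \cite{GHS2} (which requires strictly stronger decay). Checking that the hypotheses of \cite{GHS1} are satisfied in our setting, so that (a) the parametrix construction at the $k=0$ face goes through, (b) the error is compact in the appropriate weighted space, and (c) the no-resonance condition \eqref{A3} upgrades the formal parametrix into the true resolvent with a uniform bound as $\sigma \to 0$, is the crucial technical step in the argument.
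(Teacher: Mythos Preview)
Your approach is viable but takes a genuinely different route from the paper. The paper does not invoke the \cite{GHS1} parametrix for this proposition; instead it follows the positive-commutator strategy of Bony--H\"afner \cite{BH}, using the Mourre estimate on scattering manifolds established by Vasy--Wunsch \cite{VW} (recorded here as Lemmas~\ref{lem: bound} and~\ref{lem:mou}). Concretely, the paper performs a dyadic spectral decomposition $\mathrm{Id} = \varphi_0(\mathcal{L}_V) + \sum_{j\geq 0}\varphi(2^j\mathcal{L}_V)$, handles the piece supported on $[2,\infty)$ by functional calculus, and for each low-energy dyadic piece rescales to unit energy and applies the abstract limiting absorption principle (Theorem~\ref{thm:mourre}) with conjugate operator $A_j = \Phi(2^j\mathcal{L}_V)A\Phi(2^j\mathcal{L}_V)$, where $A = -\tfrac12((\phi x D_x)+(\phi x D_x)^*)$ is the radial generator near $\partial X$.

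What each approach buys: your route via \cite{GHS1} is conceptually direct and yields much finer information about the resolvent kernel near zero energy (polyhomogeneity, leading orders governed by $\nu_0$), but it imports a heavy microlocal construction as a black box and still requires extracting the weighted $L^2$ bound from the kernel description on $M^2_{k,b}$. The paper's Mourre-theoretic argument is lighter in that it rests only on the commutator bounds of \cite{VW} and abstract operator theory, avoiding the blown-up double space entirely; it also makes transparent why the borderline decay $V = O(x^2)$ is admissible, since Lemma~\ref{lem:mou} only needs the positivity of $\Delta_h + (n-2)^2/4 + V_0$ rather than any smallness of $V_0$. Both routes are legitimate and both ultimately hinge on hypotheses \eqref{A1}--\eqref{A3}.
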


\begin{remark} Bouclet-Royer\cite{BR} showed this when $V=0$. On the asymptotically Euclidean space, Bony-H\"afner \cite{BH} proved the resolvent estimate
\begin{equation}
\|\langle z\rangle^{-\alpha}(\mathcal{L}_V-\sigma)^{-1} \langle z\rangle^{-\beta}\|\lesssim 1
\end{equation}
provided $\alpha,\beta>1/2$ and $\alpha+\beta>2$.
\end{remark}

{\bf Proof.} On our setting, this result is essentially due to Vasy-Wunsch \cite{VW} and Bony-H\"afner \cite{BH}. We prove the resolvent estimate by following the method of \cite{BH} and using
Mourre estimate established in \cite{VW} on this setting. Let  $\varphi\in C_0^\infty([1/2,2])$ take value in $[0,1]$ and $\varphi_0=0, s<2$ satisfy that
$$\varphi_0(s)+\sum_{j\geq0}\varphi_j(s)=1,\quad \varphi_j(s)=\varphi(2^js).$$
Since $0$ is not an eigenvalue of $\mathcal{L}_V$,  we have
$$\mathrm{Id}=\varphi_0(\mathcal{L}_V)+\sum_{j\geq0}\varphi(2^j\mathcal{L}_V)$$
where we define the operators $\varphi_j(\mathcal{L}_V)$ $(j\in\N)$ via the Helffer-Sj\"ostrand formula
\begin{equation}
\begin{split}
\varphi_j(\mathcal{L}_V)&=\frac{1}{2\pi}\int_{\C} \bar{\partial}\tilde{\varphi_j}(\sigma) (\mathcal{L}_V-\sigma)^{-1} L(d\sigma)
\\&=\frac{1}{2\pi}\lim_{\epsilon\to0}\int_{|\mathrm{Im}\sigma|\ge\epsilon} \bar{\partial}\tilde{\varphi_j}(\sigma) (\mathcal{L}_V-\sigma)^{-1} L(d\sigma)
\end{split}
\end{equation} here $\tilde{\varphi_j}$ is an almost analytic extension of $\varphi_j$. Hence we write
\begin{equation}
\begin{split}
&\langle z\rangle^{-1}(\mathcal{L}_V-\sigma)^{-1} \langle z\rangle^{-1}\\
&=\langle z\rangle^{-1}\left( \varphi_0(\mathcal{L}_V)+\sum_{j\geq0}\varphi_j(\mathcal{L}_V)\right)(\mathcal{L}_V-\sigma)^{-1} \langle z\rangle^{-1}\\
&=\langle z\rangle^{-1}\varphi_0(\mathcal{L}_V)(\mathcal{L}_V-\sigma)^{-1} \langle z\rangle^{-1}
+\sum_{j\geq0} \langle z\rangle^{-1}\varphi_j(\mathcal{L}_V)(\mathcal{L}_V-\sigma)^{-1} \langle z\rangle^{-1} .
\end{split}
\end{equation}
Since $|\sigma|<1$ and $\varphi_0$ is supported in $[2,\infty)$, the functional calculus shows
\begin{equation}
\begin{split}
\|\langle z\rangle^{-1}\varphi_0(\mathcal{L}_V)(\mathcal{L}_V-\sigma)^{-1} \langle z\rangle^{-1} \|_{L^2\to L^2}\lesssim 1.
\end{split}
\end{equation}
Let $\phi\in C_c^\infty([\frac14,4])$ and $\phi=1$ on the support of $\varphi$ and let $\Phi\in C_c^\infty([\frac1{16},16])$ and $\Phi=1$ on $[\frac18,8]$. Define $\phi_j(s)=\phi(2^js)$ and
$\Phi_j(s)=\Phi(2^js)$. The norm $\|\cdot \|$ briefly denotes the  $\|\cdot \|_{L^2\to L^2}$, we have
\begin{equation*}
\begin{split}
&\|\langle z\rangle^{-1}\varphi_j(\mathcal{L}_V)(\mathcal{L}_V-\sigma)^{-1} \langle z\rangle^{-1} \|
=\|\langle z\rangle^{-1}\varphi_j(\mathcal{L}_V)\Phi_j(\mathcal{L}_V)(\mathcal{L}_V-\sigma)^{-1} \phi_j(\mathcal{L}_V)\langle z\rangle^{-1} \|\\
&\lesssim \|\langle z\rangle^{-1}\varphi_j(\mathcal{L}_V)\langle z\rangle\|\|\langle z\rangle^{-1}
\Phi_j(\mathcal{L}_V)(\mathcal{L}_V-\sigma)^{-1} \langle z\rangle^{-1}\| \|\langle z\rangle\phi_j(\mathcal{L}_V)\langle z\rangle^{-1} \|.
\end{split}
\end{equation*}
To prove \eqref{res-est'}, as the last operator is same to the adjoints of the first one, it suffices to show
\begin{lemma} For $j\geq 1$, there exists a constant $C$ independent of $j$ such that
\begin{equation}\label{est:b1}
 \|\langle z\rangle^{-1}\varphi_j(\mathcal{L}_V)\langle z\rangle\|\leq C
 \end{equation}
 and
 \begin{equation}\label{est:b2}
 \|\langle z\rangle^{-1}
\Phi_j(\mathcal{L}_V)(\mathcal{L}_V-\sigma)^{-1} \langle z\rangle^{-1}\| \leq C
 \end{equation}
uniformly in $\sigma\in\C\setminus \R$ and $|\sigma|<1$.

\end{lemma}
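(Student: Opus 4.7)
The plan is to reduce both \eqref{est:b1} and \eqref{est:b2} to uniform-in-$j$ Mourre-type estimates on the rescaled operator $2^j\mathcal{L}_V$. The crucial gain comes from Lemma~\ref{lem:mou}: since $\langle z\rangle^{-1}\sim x$ near $\partial X$, the estimate (2.6) with $\mu=1$ shows that each weight $\langle z\rangle^{-1}$ combined with $\Phi_j(\mathcal{L}_V)$ produces a factor $2^{-j/2}$, which must balance the factor $2^j$ coming from $(\mathcal{L}_V-\sigma)^{-1}$ localised at energy scale $2^{-j}$.

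For \eqref{est:b2}, choose $\tilde{\Phi}\in C_c^\infty([\tfrac{1}{16},16])$ with $\tilde{\Phi}\equiv 1$ on $\mathrm{supp}\,\Phi$. Using that all functions of $\mathcal{L}_V$ commute, I split
\begin{equation*}
\langle z\rangle^{-1}\Phi_j(\mathcal{L}_V)(\mathcal{L}_V-\sigma)^{-1}\langle z\rangle^{-1} = \bigl[\langle z\rangle^{-1}\Phi_j(\mathcal{L}_V)\langle A_j\rangle\bigr]\, M_j(\sigma)\,\bigl[\langle A_j\rangle\tilde{\Phi}_j(\mathcal{L}_V)\langle z\rangle^{-1}\bigr],
\end{equation*}
where $M_j(\sigma):=\langle A_j\rangle^{-1}\tilde{\Phi}_j(\mathcal{L}_V)(\mathcal{L}_V-\sigma)^{-1}\tilde{\Phi}_j(\mathcal{L}_V)\langle A_j\rangle^{-1}$. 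By Lemma~\ref{lem:mou} and duality, each bracketed factor has norm $\lesssim 2^{-j/2}$. Writing $(\mathcal{L}_V-\sigma)^{-1}=2^j(2^j\mathcal{L}_V-2^j\sigma)^{-1}$, it remains to prove the uniform-in-$j$ limiting absorption estimate
\begin{equation*}
\bigl\|\langle A_j\rangle^{-1}\tilde{\Phi}(2^j\mathcal{L}_V)(2^j\mathcal{L}_V-\tau)^{-1}\tilde{\Phi}(2^j\mathcal{L}_V)\langle A_j\rangle^{-1}\bigr\|_{L^2\to L^2}\leq C
\end{equation*}
for $\tau\in\mathbb{C}\setminus\mathbb{R}$. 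When $\mathrm{Re}\,\tau$ lies far from $\mathrm{supp}\,\tilde{\Phi}=[\tfrac{1}{16},16]$ the estimate is immediate by functional calculus. When $\mathrm{Re}\,\tau$ belongs to a fixed compact set, this is the standard Mourre LAP applied to $(2^j\mathcal{L}_V)^{1/2}$, whose hypotheses are exactly the commutator bounds (2.3)--(2.5) and Mourre inequality (2.9) of Lemma~\ref{lem:mou}.

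For \eqref{est:b1}, decompose $\langle z\rangle^{-1}\varphi_j(\mathcal{L}_V)\langle z\rangle=\varphi_j(\mathcal{L}_V)+\langle z\rangle^{-1}[\varphi_j(\mathcal{L}_V),\langle z\rangle]$. The first term is bounded by functional calculus. For the commutator, the Helffer--Sj\"ostrand formula gives
\begin{equation*}
[\varphi_j(\mathcal{L}_V),\langle z\rangle]=-\frac{1}{2\pi}\int_{\mathbb{C}}\bar{\partial}\tilde{\varphi}_j(\sigma)\,(\mathcal{L}_V-\sigma)^{-1}[\mathcal{L}_V,\langle z\rangle](\mathcal{L}_V-\sigma)^{-1}\,L(d\sigma).
\end{equation*}
Since $V$ is a multiplication operator and $\langle z\rangle\sim 1/x$ near $\partial X$, a direct computation shows $[\mathcal{L}_V,\langle z\rangle]=[\Delta_g,\langle z\rangle]\in x\,\mathrm{Diff}_b^1(X)$. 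Lemma~\ref{lem: bound} with $s=0$, after the rescaling $\sigma\mapsto 2^j\sigma$, bounds both $\|\langle z\rangle^{-1}(\mathcal{L}_V-\sigma)^{-1}\|$ and $\|[\mathcal{L}_V,\langle z\rangle](\mathcal{L}_V-\sigma)^{-1}\|$ by $C|\sigma|^{1/2}|\mathrm{Im}\,\sigma|^{-1}$. Integrating against $|\bar{\partial}\tilde{\varphi}_j(\sigma)|\leq C_N 2^{j(N+1)}|\mathrm{Im}\,\sigma|^N$ over $\mathrm{supp}\,\tilde{\varphi}_j$, which has size $\sim 2^{-j}$ in both real and imaginary directions with $|\sigma|\sim 2^{-j}$, yields a bound of order one uniformly in $j$ once $N$ is chosen sufficiently large.

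The principal obstacle is the uniform-in-$j$ Mourre LAP invoked in paragraph two. The verification amounts to running the standard differential-inequality argument of Mourre with the scale-dependent conjugate operator $A_j$ and checking that the precise normalisation of $A_j$ encoded in Lemma~\ref{lem:mou} produces constants independent of the dyadic index $j$; this is the single step where the full geometric and spectral information about $\mathcal{L}_V$ on the scattering manifold is exploited.
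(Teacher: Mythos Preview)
Your approach is essentially the same as the paper's: for \eqref{est:b1} you use the commutator decomposition and Helffer--Sj\"ostrand formula combined with Lemma~\ref{lem: bound}, and for \eqref{est:b2} you convert the spatial weights $\langle z\rangle^{-1}$ into $\langle A_j\rangle$-weights via Lemma~\ref{lem:mou} (each producing a factor $2^{-j/2}$) and then invoke the uniform Mourre limiting absorption principle, exactly as the paper does. The only cosmetic difference is that the paper makes the passage from the Mourre estimate on $(2^j\mathcal{L}_V)^{1/2}$ to the resolvent bound for $2^j\mathcal{L}_V$ explicit via the factorization $(2^j\mathcal{L}_V-\sigma)=\bigl((2^j\mathcal{L}_V)^{1/2}-\sqrt{\sigma}\bigr)\bigl((2^j\mathcal{L}_V)^{1/2}+\sqrt{\sigma}\bigr)$, whereas you leave this step implicit.
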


\begin{proof} We begin to prove the first one. Recall $\varphi_j(\mathcal{L}_V)=\varphi(2^j\mathcal{L}_V)$, since the spectrum theory implies $\varphi(2^j\mathcal{L}_V)$ is bounded on $L^2$ with norm
$\sup_{s}|\varphi(s)|\leq 1$ and
\begin{equation}
\langle z\rangle^{-1}\varphi(2^j\mathcal{L}_V)\langle z\rangle= \langle z\rangle^{-1}[\varphi(2^j\mathcal{L}_V), \langle z\rangle]+\varphi(2^j\mathcal{L}_V),
 \end{equation}
 it suffices to estimate  $\langle z\rangle^{-1}[\varphi(2^j\mathcal{L}_V), \langle z\rangle]$ by functional calculus. Let $\tilde{\varphi}$ be a compactly supported almost analytic extension of $\varphi$ and let
 $R_j(\tau)$ denote the resolvent $$R_j(\tau)=(2^j \mathcal{L}_V-\tau)^{-1},$$ we have by Helffer-Sj\"ostrand formula
 \begin{equation}
 \begin{split}
 \langle z\rangle^{-1}[\varphi(2^j\mathcal{L}_V), \langle z\rangle]&=\frac1{2\pi}\int_{\C}\bar{\partial}\tilde{\varphi}(\tau)\langle z\rangle^{-1}[R_j(\tau), \langle z\rangle] L(d\tau)\\
 &=-\frac{2^j}{2\pi}\int_{\C}\bar{\partial}\tilde{\varphi}(\tau)\langle z\rangle^{-1}R_j(\tau) [\mathcal{L}_V, \langle z\rangle]R_j(\tau) L(d\tau)\\
 &=-\frac{2^j}{2\pi}\lim_{\epsilon\to0}\int_{|\mathrm{Im}\sigma|\geq\epsilon}\bar{\partial}\tilde{\varphi}(\tau)\langle z\rangle^{-1}R_j(\tau) [\mathcal{L}_V, \langle z\rangle]R_j(\tau) L(d\tau).
 \end{split}
 \end{equation}
Since $\Delta_g=x^2 P_\bb$,  we see $[\mathcal{L}_V, \langle z\rangle]=Q\in \mathrm{Diff}^1_{\scc}=x\mathrm{Diff}^1_{\bb}$, then
 from Lemma \ref{lem: bound}, we have
\begin{equation}
\|\langle z\rangle^{-1}\left(2^j\mathcal{L}_V-\tau\right)^{-1}\|_{L^2(X)\to L^2(X)}\lesssim 2^{-j/2} |\tau|^{1/2}|\mathrm{Im}\tau|^{-1}.
\end{equation}
and
\begin{equation}
\|Q\left(2^j\mathcal{L}_V-\tau\right)^{-1}\|_{L^2(X)\to L^2(X)}\lesssim 2^{-j/2} |\tau|^{1/2}|\mathrm{Im}\tau|^{-1}.
\end{equation}
Note that $|\bar{\partial}\tilde{\varphi}(\tau)|\leq C |\mathrm{Im}\tau|^N$ for any $N>0$ and $|\tau|\leq 2$ which is due to the compact support of $\varphi$,
then the above integral converges, and hence $\langle z\rangle^{-1}[\varphi(2^j\mathcal{L}_V), \langle z\rangle]$ is a bounded operator on $L^2$.

Next we prove \eqref{est:b2}.
For $\chi\in C_c^\infty([\frac1{200}, \frac1{100}])$
\begin{equation*}
  \begin{split}
& \|\langle z\rangle^{-1}
\Phi(2^j\mathcal{L}_V)(\mathcal{L}_V-\sigma)^{-1} \langle z\rangle^{-1}\|\\&\leq \|\langle z\rangle^{-1}
\Phi(2^j\mathcal{L}_V)(\mathcal{L}_V-\sigma)^{-1}\chi(2^j|\sigma|) \langle z\rangle^{-1}\|\\&\quad+\|\langle z\rangle^{-1}
\Phi(2^j\mathcal{L}_V)(\mathcal{L}_V-\sigma)^{-1}(1-\chi(2^j|\sigma|)) \langle z\rangle^{-1}\|.
\end{split}
 \end{equation*}
 Since the function $\Phi(2^j\tau)(\tau-\sigma)^{-1}$ is bounded when $\frac{1}{200}2^{-j}\leq |\sigma|\leq 2^{-j}/100$, the second operator is bounded by the spectrum theory.
 Hence it suffices to consider the $L^2$-boundedness of the operator $\|\langle z\rangle^{-1}
\Phi(2^j\mathcal{L}_V)(\mathcal{L}_V-\sigma)^{-1}\chi(2^j |\sigma|) \langle z\rangle^{-1}\|$, which is same as to the boundedness of
$$\|\langle z\rangle^{-1}
\Phi(2^j\mathcal{L}_V)(\mathcal{L}_V-2^{-j}\sigma)^{-1} \langle z\rangle^{-1}\|, \quad |\sigma|\in [\frac1{200},\frac1{100}].$$
Let $A=-\frac12\left((\phi xD_x)+(\phi xD_x)^*\right)$ where $\phi\in \mathcal{C}_c^\infty(X)$ supported in a collar
neighborhood of $\partial X$ and be identically $1$ near $\partial X$. Define $A_j=\Phi(2^j\mathcal{L}_V) A \Phi(2^j\mathcal{L}_V) $. Note that
  \begin{equation}
  \begin{split}
& \|\langle z\rangle^{-1}
\Phi(2^j\mathcal{L}_V)(\mathcal{L}_V-2^{-j}\sigma)^{-1} \langle z\rangle^{-1}\|\\
&\leq \|\langle z\rangle^{-1}
\tilde{\Phi}(2^j\mathcal{L}_V) (\sqrt{\mathcal{L}_V}+(2^{-j}\sigma)^{1/2})^{-1} \langle z\rangle\|\\& \|\langle z\rangle^{-1}
\Phi(2^j\mathcal{L}_V) A_j\|\| A_j^{-1}(\sqrt{\mathcal{L}_V}-(2^{-j}\sigma)^{1/2})^{-1}A_j^{-1}\| \| A_j \tilde{\Phi}(2^j\mathcal{L}_V) \langle z\rangle^{-1}\|
\\
&\leq 2^{-j/2}
\| A_j^{-1}(\sqrt{\mathcal{L}_V}-(2^{-j}\sigma)^{1/2})^{-1}A_j^{-1}\| \\
&\leq
\| A_j^{-1}((2^j\mathcal{L}_V)^{1/2}-\sqrt{\sigma})^{-1}A_j^{-1}\|
\end{split}
 \end{equation}
 where we use
 \begin{equation}
 \|\langle A_j\rangle \Phi(2^j \mathcal{L}_V) x\|\lesssim 2^{-j/2}, \quad \mu\in [0,1].
 \end{equation}
 and
  \begin{equation}
  \|\langle z\rangle^{-1}
\tilde{\Phi}(2^j\mathcal{L}_V) (\sqrt{\mathcal{L}_V}+(2^{-j}\sigma)^{1/2})^{-1} \langle z\rangle\|\leq \frac{2^{j/2}}{|\sigma|}.
 \end{equation}
From Lemma \ref{lem:mou} which verifies the condition of Theorem \ref{thm:mourre}, one has by the Mourre theory
 \begin{equation}
  \begin{split}
  \| A_j^{-1}((2^j\mathcal{L}_V)^{1/2}-\sqrt{\sigma})^{-1}A_j^{-1}\|\lesssim 1.
\end{split}
 \end{equation}
 Hence we prove \eqref{est:b2}.

\end{proof}

\section{Strichartz estimates on the setting with a mild trapped set}

In this section, we prove the second part of Theorem \ref{thm}. The argument is based on \cite{HZ} and \cite{BGH}.
We always assume $0<h\leq h_0\ll1$ and $\varphi\in\mathcal{C}_c^\infty((1,2))$ and  let $\varphi_0(\lambda)=\sum_{j\leq 0}\varphi (2^{-j}\lambda)$  in this section. \vspace{0.2cm}

We first give a global-in-time local smoothing estimate localized at high frequency.
\begin{proposition}[global-in-time local smoothing]\label{local-l} Let $\chi\in \mathcal{C}_0^\infty(X)$ and $\varphi\in\mathcal{C}_c^\infty((1,2))$, then we have
\begin{equation}
\|\chi e^{it\Delta_g}\varphi(h^2\Delta_g)u_0\|_{L^2(\R;L^2(X))}\leq a(h)\|u_0\|_{L^2}
\end{equation}
where $a(h)\leq Ch^{1/2}$ if $\chi$ is supported outside the trapped set $K$ and otherwise $a(h)\leq C(h\log h)^{1/2}$.
\end{proposition}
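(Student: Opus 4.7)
The plan is to reduce the local smoothing estimate to a semiclassical resolvent estimate via Kato's smoothing theorem and then invoke the known semiclassical resolvent bounds. First I would set $H=\Delta_g$ and appeal to Theorem~\ref{kato-sm} (Kato smoothing) in the appendix, which says that an estimate of the form $\|B e^{-itH} u_0\|_{L^2_t L^2_z} \leq C\|u_0\|_{L^2}$ is equivalent to the uniform weighted-resolvent bound $\sup_{\lambda\in\R,\,\epsilon>0}\|B\,\mathrm{Im}(H-\lambda-i\epsilon)^{-1}B^*\|_{L^2\to L^2}\leq C'$. Applied to $B=\chi\,\varphi(h^2\Delta_g)^{1/2}$, and using that $\varphi(h^2 H)$ commutes with $e^{-itH}$, the Proposition reduces to showing
\begin{equation*}
\sup_{\lambda,\epsilon}\bigl\|\chi\,\varphi(h^2\Delta_g)^{1/2}(\Delta_g-\lambda-i\epsilon)^{-1}\varphi(h^2\Delta_g)^{1/2}\chi\bigr\|_{L^2\to L^2}\lesssim a(h)^2.
\end{equation*}
Because $\varphi$ is supported in $(1,2)$, the spectral cutoff localizes $\lambda$ to a small neighborhood of $[h^{-2},2h^{-2}]$; for $\lambda$ outside this window the operator is already controlled by the functional calculus and contributes a bounded (and in fact rapidly decaying) term.

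Next I would perform the standard semiclassical rescaling. Writing $\lambda=h^{-2}\sigma$ with $\sigma$ in a fixed compact subset of $(0,\infty)$, one has $(\Delta_g-\lambda-i\epsilon)^{-1}=h^2(h^2\Delta_g-\sigma-ih^2\epsilon)^{-1}$, so the target bound becomes
\begin{equation*}
\bigl\|\chi(h^2\Delta_g-\sigma-i\epsilon')^{-1}\chi\bigr\|_{L^2\to L^2}\lesssim h^{-2}a(h)^2,
\end{equation*}
uniformly in $\sigma$ in a compact set and $\epsilon'>0$. For $\chi$ supported outside the trapped set $K$ (or in the globally non-trapping case), the semiclassical resolvent estimates of Vasy-Zworski \cite{VZ} and Datchev-Vasy \cite{DV} give the optimal non-trapping bound $\|\chi(h^2\Delta_g-\sigma-i\epsilon')^{-1}\chi\|\lesssim h^{-1}$, which yields $a(h)\lesssim h^{1/2}$. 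In the case where $\chi$ is supported near $K$ and the assumptions of \cite{BGH} on the trapped set hold, the resolvent estimates of Nonnenmacher-Zworski \cite{NZ} (together with the gluing of Datchev-Vasy \cite{DV} to handle the scattering end) give $\|\chi(h^2\Delta_g-\sigma-i\epsilon')^{-1}\chi\|\lesssim h^{-1}\log(1/h)$, hence $a(h)\lesssim (h|\log h|)^{1/2}$.

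The main obstacle is technical rather than conceptual: one must ensure that the semiclassical resolvent bounds of \cite{NZ,DV,VZ}, originally stated in compact or Euclidean settings, apply uniformly on our scattering manifold with the scattering cutoffs used here. This is exactly what is carried out in \cite{BGH}, where it is shown that the gluing procedure of Datchev-Vasy combines the interior microlocal bound near the hyperbolic trapped set with the scattering estimate at infinity, yielding the stated bounds uniformly for $\sigma$ in a compact subset of $(0,\infty)$. Modulo citing those inputs, the only remaining step is the straightforward verification that the frequency-localized Kato smoothing reduction above is compatible with the weighted cutoff $\chi$, which follows from the standard $TT^*$ identity together with the spectral-theoretic bound $\|\varphi(h^2\Delta_g)\|_{L^2\to L^2}\lesssim 1$.
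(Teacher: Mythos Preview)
Your proposal is correct and follows essentially the same route as the paper: reduce the local smoothing estimate to a weighted resolvent bound via Kato smoothing, rescale semiclassically, and invoke the known bounds $\|\chi(h^2\Delta_g-\sigma\pm i\epsilon)^{-1}\chi\|\lesssim h^{-1}$ (non-trapping, \cite{VZ}) or $\lesssim h^{-1}|\log h|$ (mild trapping, \cite{NZ,D,DV}), with \cite{BGH} handling the gluing on the scattering manifold. The paper's own proof is in fact much terser than yours---it simply cites these resolvent estimates and notes that the local-in-time argument of \cite{BGH} extends globally---so your write-up is, if anything, a more explicit version of the same argument.
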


\begin{proof}This result was due to \cite{BGH} even though it was stated as a local-in-time version but the argument works for this global-in-time estimate.
The proof directly follows from the resolvent estimates due to Vasy-Zworski \cite{VZ} for non-trapping case and Nonnenmacher-Zworski\cite{NZ}, Datchev\cite{D}, Datchev-Vasy\cite{DV}
for mild trapped case
\begin{equation}
\|\chi(h^2\Delta_g-(1\pm i\epsilon))^{-1}\chi\|_{L^2(X)\to L^2(X)}\lesssim \frac{|\log h|}h,\quad 0<h<h_0\ll1.
\end{equation}
\end{proof}

Next we prove the Strichartz estimate on the scattering manifold with a mild trapped set. We here
consider the case $V=0$ and let $\mathcal{L}_0=\Delta_g$.
\begin{proposition}[Global-in-time Strichartz estimate]\label{Str-loc} Let $\chi\in \mathcal{C}_c^\infty(X)$ such that $\chi=1$ on the trapped set. For the admissible pair $(q,r)$ satisfying \eqref{admissible} with $q>2$, we have the Strichartz estimate:

(i) Low frequency estimate
\begin{align}\label{l-str}
\|e^{it\mathcal{L}_0}\varphi_0(\mathcal{L}_0)u_0\|_{L^q(\R;L^r(X))}\leq C\|u_0\|_{L^2(X)};
\end{align}

(ii) High frequency estimate outside the trapped set
\begin{align}\label{out-trap}
\|(1-\chi) e^{it\mathcal{L}_0}\varphi(h^2\mathcal{L}_0)u_0\|_{L^q(\R;L^r(X))}\leq C\|\varphi(h^2\mathcal{L}_0)u_0\|_{L^2(X)};
\end{align}

(iii) High frequency estimate inside the trapped set
\begin{align}\label{h-in-trap}
\|\chi e^{it\mathcal{L}_0}\varphi(h^2\mathcal{L}_0)u_0\|_{L^q(\R;L^r(X))}\leq C\|\varphi(h^2\mathcal{L}_0) u_0\|_{L^2(X)}.
\end{align}

\end{proposition}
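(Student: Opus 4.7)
The plan is to handle the three estimates separately, leveraging \cite{HZ} for (i), the microlocalized-propagator machinery of \cite{HTW, HZ} for (ii), and combining (ii) with the log-loss local smoothing estimate of Proposition \ref{local-l} via a $TT^*$ argument for (iii).

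For (i), the low-frequency estimate, trapping is a semiclassical phenomenon and does not affect the low-energy spectral measure or resolvent. The microlocalized propagator decomposition $U_j(t)$ from \eqref{Uiti} and the associated dispersive estimates from \cite[Sections 5--6]{HZ} go through unchanged for $\lambda \lesssim 1$, and Keel-Tao's formalism then yields \eqref{l-str}.

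For (ii), outside the trapped set the geodesic flow is effectively non-trapping, so one can carry through the microlocalization and parametrix construction from \cite{HTW, HZ} with the symbols $Q_j(\lambda)$ chosen supported away from the trap. This produces dispersive estimates on $U_j(t) U_j(s)^*$ matching the non-trapping case, and the Keel-Tao argument combined with a Littlewood-Paley decomposition over dyadic scales $h^{-2}\sim 2^j$ gives \eqref{out-trap}.

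For (iii), the main obstacle, pointwise dispersive estimates are unavailable inside the trap, so I would use a $TT^*$/Duhamel strategy combining (ii) with Proposition \ref{local-l}. The global-in-time local smoothing from Proposition \ref{local-l} gives
\[
\|\chi e^{it\mathcal{L}_0}\varphi(h^2\mathcal{L}_0)\|_{L^2_z \to L^2_t L^2_z} \lesssim (h|\log h|)^{1/2},
\]
which by $TT^*$ translates to
\[
\|\chi \varphi(h^2\mathcal{L}_0) e^{i(t-s)\mathcal{L}_0}\varphi(h^2\mathcal{L}_0)\chi\|_{L^2_s L^2_z \to L^2_t L^2_z} \lesssim h|\log h|.
\]
Combining this bound with the Strichartz estimate outside the trap from part (ii) via a spatial partition of unity $1 = \tilde\chi + (1-\tilde\chi)$ with $\tilde\chi \in \mathcal{C}_c^\infty(X)$ equal to $1$ on $\mathrm{supp}(\chi)$ and then interpolating against the trivial $L^\infty_t L^2_z$ bound from unitarity, one obtains \eqref{h-in-trap} for every admissible $(q,r)$ with $q>2$. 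Finally, orthogonality of the spectral projectors $\varphi(h^2\mathcal{L}_0)$ and a Littlewood-Paley square function argument sum the estimate over dyadic scales $h^{-2}\sim 2^j$. I expect the main obstacle to be verifying that the $|\log h|$ factors are precisely absorbed by the condition $q>2$; the endpoint $q=2$ is expected to fail at exactly this point, in parallel with the corresponding loss in \cite{BM}.
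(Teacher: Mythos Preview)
Your treatment of (i) matches the paper's.

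For (ii), the paper takes a simpler route than the one you sketch. Rather than redoing the microlocal parametrix with symbols supported away from the trap, the paper writes $w=(1-\chi)e^{it\mathcal{L}_0}\varphi(h^2\mathcal{L}_0)u_0$, which solves $i\partial_t w+\mathcal{L}_0 w=-[\Delta_g,\chi]\varphi(h^2\mathcal{L}_0)u$. Since $\chi=1$ on the trapped set, the equation lives effectively on a nontrapping manifold, so the already-proved nontrapping Strichartz estimate applies as a black box; the commutator forcing is compactly supported away from the trap and is controlled by the dual of Proposition~\ref{local-l} together with Christ--Kiselev. Your direct parametrix approach would need more care: $(1-\chi)$ is only a spatial cutoff, so bicharacteristics through $\mathrm{supp}(1-\chi)$ can still meet the trap, and the \cite{HZ} construction does not localize in phase space in the way you describe without further work.

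For (iii), there is a genuine gap. Interpolating between the $L^2_tL^2_z$ smoothing bound and the $L^\infty_tL^2_z$ unitarity bound only yields $L^q_tL^2_z$; the spatial exponent never leaves $r=2$, so you cannot reach any admissible $r>2$ this way. Nor does the partition $1=\tilde\chi+(1-\tilde\chi)$ help: both pieces of $\chi e^{it\mathcal{L}_0}\varphi u_0$ still carry the factor $\chi$, so part (ii) is not directly applicable. Some genuine dispersive input inside the trap is required. The paper supplies it by following \cite{BGH}: one partitions time into intervals of length $a(h)=h|\log h|$, applies the short-time semiclassical Strichartz estimate of \cite{BGH} (Lemma~\ref{Strichartz-c} here) on each interval, and controls the resulting Duhamel pieces $F_j,G_j$ using Proposition~\ref{local-l}. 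The sum over intervals closes via $\ell^2\hookrightarrow\ell^q$, and the restriction $q>2$ enters through Christ--Kiselev when passing from full-interval integrals to truncated ones, not through absorbing $|\log h|$ factors as you anticipate.
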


\begin{proof} We first consider the low frequency estimate \eqref{l-str} which follows the same argument of the non-trapping case, since the trapped set only has
influence on the high frequency. We sketch here that the microlocalized spectrum measure in \cite{HZ} do not need the non-trapping condition for the low
frequency part and we also do not need the non-trapping assumption for resolvent estimate at low frequency. \vspace{0.2cm}

We secondly consider the estimate \eqref{out-trap} outside the trapped set.  Let $w=(1-\chi) e^{it\mathcal{L}_0}\varphi(h^2\mathcal{L}_0)u_0$ then $w$ solves
\begin{equation*}
i\partial_t w+ \mathcal{L}_0 w=-[\Delta_g, \chi]\varphi(h^2\mathcal{L}_0) u, \quad  w(0)= (1-\chi) \varphi(h^2\mathcal{L}_0) u_0(z).
\end{equation*}
By Duhamle formula, we have
\begin{equation*}
\begin{split}
w=&e^{it\mathcal{L}_0}(1-\chi) \varphi(h^2\mathcal{L}_0) u_0(z)-(1-\chi)\int_0^t e^{i(t-s)\mathcal{L}_0}[\Delta_g,\chi]\varphi(h^2\mathcal{L}_0) u(s)ds\\&
=w_{\mathrm{lin}}+w_{\mathrm{nonh}}.
\end{split}\end{equation*}
Since $\chi=1$ on the trapped set, we can regard the above equation as on a asymptotically conic manifold without trapped set. Hence we can apply the
Strichartz estimate \eqref{Str-est-n} and the Christ-Kiselev lemma \cite{CK} to obtain for $q>2$
\begin{equation*}
\begin{split}
&\|w(t,z)\|_{L^q(\R;L^r(X^\circ))}\\&\lesssim \|\varphi(h^2\mathcal{L}_0) u_0\|_{L^2(X^\circ)}+\big\|(1-\chi)\int_0^t e^{i(t-s)\mathcal{L}_0}[\Delta_g,\chi]\varphi(h^2\mathcal{L}_0) u(s)ds \big\|_{L^q(\R;L^r(X^\circ))}.
\end{split}
\end{equation*}
Define $T=\chi e^{it\mathcal{L}_0}\varphi(h^2\mathcal{L}_0): L^2(X^\circ)\to L^2(\R;L^2(X^\circ))$, by Proposition \ref{local-l}, then we
have that $T$ is bounded with norm $h^{1/2}$. By the dual, its adjoint $T^*$ is also bounded by $h^{1/2}$,
 where $$T^*: L^2(\R;L^2(X^\circ))\to L^2, \quad T^* F=\int_{s\in\R}e^{-is\mathcal{L}_0} \varphi(h^2\mathcal{L}_0)\chi^* F(s)ds.$$
Define the operator
$$B: L^2(\R;L^2(X^\circ))\to L^q(\R;L^r(X^\circ)), \quad B F=\int_{s\in\R}(1-\chi) e^{i(t-s)\mathcal{L}_0} \varphi(h^2\mathcal{L}_0)\chi^* F(s)ds.$$
Hence by the Strichartz estimate on nontrapping
\begin{equation}
\begin{split}
&\|B F\|_{L^q(\R;L^r(X^\circ))}\\&=\big\|(1-\chi) e^{i t\mathcal{L}_0}\int_{s\in\R}e^{-is\mathcal{L}_0} \varphi(h^2\mathcal{L}_0)\chi^* F(s)ds\big\|_{L^q(\R;L^r(X^\circ))}\\
&\lesssim \big\|\int_{s\in\R}e^{-is\mathcal{L}_0} \varphi(h^2\mathcal{L}_0)\chi^* F(s)ds\big\|_{L^2(X^\circ)}\lesssim h^{1/2}\|F\|_{L^2(\R;L^2(X^\circ))}.
\end{split}
\end{equation}
By the Christ and Kiselev lemma \cite{CK}, we have done for $q>2$
\begin{equation}
\begin{split}
&\big\|(1-\chi)\int_0^t e^{i(t-s)\mathcal{L}_0}[\Delta_g,\chi]\varphi(h^2\mathcal{L}_0) u(s)ds \big\|_{L^q(\R;L^r(X^\circ))}\\&\lesssim h^{1/2} \|[\Delta_g,\chi]\varphi(h^2\mathcal{L}_0) u\|_{L^2(\R;L^2(X^\circ))}.
\end{split}
\end{equation}
Note that $[\Delta_g,\chi]$ is compact supported and losing one-derivative, by local smoothing Proposition \ref{local-l} which gains $h^{1/2}$, we obtain
\begin{equation}
\begin{split}
h^{1/2} \|[\Delta_g,\chi]\varphi(h^2\mathcal{L}_0) u\|_{L^2(\R;L^2(X^\circ))}\lesssim \|\varphi(h^2\mathcal{L}_0) u_0\|_{L^2(X^\circ)}.
\end{split}
\end{equation}

Finally we consider the high frequency inside the mild trapped set. The proof follows from the argument in \cite{BGH}. Let $a(h)=h |\log h|$ and let $v=\chi e^{it\mathcal{L}_0}\varphi(h^2\mathcal{L}_0)u_0$ then $v$ solves
\begin{equation*}
i\partial_t v+ \mathcal{L}_0 v=[\Delta_g, \chi]\varphi(h^2\mathcal{L}_0) u, \quad  v(0)= \chi \varphi(h^2\mathcal{L}_0) u_0(z).
\end{equation*}
Let $\phi(s)\in\mathcal{C}_0^\infty([-1,1])$ satisfy $\phi(0)=1$ and $\sum_{j\in\mathbb{Z}}\phi(s-j)=1$ and define $v_j=\phi(\frac t {a(h)}-j) v$, then $v=\sum_{j\in\mathbb{Z}} v_j$ and each $v_j$ supported on a time
interval of length $2a(h)$
satisfies
\begin{equation*}
\begin{cases}
i\partial_t v_j+ \mathcal{L}_0 v_j=F_j+G_j, \\ v_j(t)|_{t=(j-1)a(h)}=0.
\end{cases}
\end{equation*}
where $F_j=[i\partial_t, \phi(\frac t {a(h)}-j)]\chi \varphi(h^2\mathcal{L}_0)  u$ and $G_j=\phi(\frac t {a(h)}-j)[\Delta_g,\chi]\varphi(h^2\mathcal{L}_0)  u$.
Then by Duhamel's formula we have
\begin{equation*}
\begin{split}
v_j(t,z)=\int_{(j-1)a(h)}^t e^{i(t-s)\mathcal{L}_0}(F_j+G_j)(s)ds=\chi\int_{(j-1)a(h)}^t e^{i(t-s)\mathcal{L}_0}(F_j+G_j)(s)ds.
\end{split}\end{equation*}
Define
\begin{equation*}
\begin{split}
\tilde{v}^F_j(t,z)=\chi \int_{(j-1)a(h)}^{(j+1)a(h)} e^{i(t-s)\mathcal{L}_0}F_j(s)ds, \quad \tilde{v}^G_j(t,z)=\chi \int_{(j-1)a(h)}^{(j+1)a(h)} e^{i(t-s)\mathcal{L}_0}G_j(s)ds.
\end{split}\end{equation*}
Note the support of $\tilde{v}^F_j$ with respect to variable $t$ is contained in $[(j-1)a(h),(j+1)a(h)]$,
\begin{equation*}
\begin{split}
\|\tilde{v}^F_j(t,z)\|_{L^q(\R;L^r(X))}&=\left\|e^{it\mathcal{L}_0}\int_{(j-1)a(h)}^{(j+1)a(h)} e^{-is\mathcal{L}_0}F_j(s)ds\right\|_{L^q([(j-1)a(h),(j+1)a(h)];L^r(X^\circ))}
\\&\leq \left\|\int_{(j-1)a(h)}^{(j+1)a(h)} e^{-is\mathcal{L}_0}F_j(s)ds\right\|_{L^2(X^\circ)}\leq C a(h)^{1/2}\|F_j\|_{L^2(\R;L^2(X^\circ))},
\end{split}\end{equation*}
where we use the following in the first inequality and H\"older's inequality for the second inequality
\begin{lemma}\label{Strichartz-c}
We have the Strichartz estimate
\begin{equation*}
\begin{split}
\|\chi e^{it\mathcal{L}_0}\varphi(h^2\mathcal{L}_0)u_0\|_{L^q([t_0-a(h), t_0+a(h)];L^r(X^\circ))}\leq C\|u_0\|_{L^2(X^\circ)}, \quad \forall t_0\in\R.
\end{split}\end{equation*}
\end{lemma}
\begin{proof} This is direct consequence of \cite[Theorem 3.8]{BGH}.
\end{proof}

By the Christ-Kiselev lemma, we obtain that for $q>2$
\begin{equation*}
\begin{split}
\|v_j^F(t,z)\|_{L^q(\R;L^r(X^\circ))}\leq C a(h)^{1/2}\|F_j\|_{L^2(\R;L^2(X^\circ))}.
\end{split}\end{equation*}
Note that \begin{align*}
F_j=\frac{i} {a(h)} \phi'(\frac t {a(h)}-j)\chi \varphi(h^2\mathcal{L}_0)u,
\end{align*}
hence \begin{align*}
\sum_{j\in\Z}\|F_j\|^2_{L^2(\R;L^2(X^\circ))}\lesssim a(h)^{-2}\|\chi \varphi(h^2\mathcal{L}_0)u\|^2_{L^2(\R;L^2(X^\circ))}\lesssim a(h)^{-1}\|\varphi(h^2\mathcal{L}_0) u_0\|^2_{L^2}.
\end{align*}
On the other hand,
 \begin{equation*}
\begin{split}
\|\tilde{v}^G_j(t,z)\|_{L^q(\R;L^r(X))}&=\left\|e^{it\mathcal{L}_0}\int_{(j-1)a(h)}^{(j+1)a(h)} e^{-is\mathcal{L}_0}G_j(s)ds\right\|_{L^q([(j-1)a(h),(j+1)a(h)];L^r(X^\circ))}
\\&\leq \left\|\int_{(j-1)a(h)}^{(j+1)a(h)} e^{-is\mathcal{L}_0}G_j(s)ds\right\|_{L^2(X^\circ)}\leq C h^{1/2}\|G_j\|_{L^2(\R;L^2(X^\circ))},
\end{split}\end{equation*}
where we use Lemma \ref{Strichartz-c} again in the first inequality and while for the second inequality we use the duality estimate of the local smoothing with no trapped set since
$G_j$ vanishes at the trapped set $\pi(K)$. Similarly note that \begin{align*}
G_j=\phi(\frac t {a(h)}-j)(h^{-1}\nabla\chi+\Delta\chi) \varphi(h^2\mathcal{L}_0)u,
\end{align*}
Let $\chi_{\text{out}}=1$ on the support of $\nabla\chi$, then $\chi_{\text{out}} $ is supported outside the trapped set. Thus by Proposition \ref{local-l} \begin{align*}
\sum_{j\in\Z}\|G_j\|^2_{L^2(\R;L^2(X^\circ))}\lesssim h^{-2}\|\chi_{\mathrm{out}} \varphi(h^2\mathcal{L}_0)u\|^2_{L^2(\R;L^2(X^\circ))}\lesssim h^{-1}\|\varphi(h^2\mathcal{L}_0) u_0\|^2_{L^2}.
\end{align*}
Therefore by embedding $\ell^2(\Z)$ to $\ell^q(\Z)$ with $q\geq2$
\begin{equation}
\begin{split}
\|v\|^2_{L^q(\R;L^r(X^\circ))}&\lesssim \sum_{j\in\mathbb{Z}} \left(\|v^F_j\|^2_{L^q(\R;L^r(X^\circ))}+(\|v^G_j\|^2_{L^q(\R;L^r(X^\circ))}\right)
\\&\lesssim \sum_{j\in\mathbb{Z}} \left(a(h)\|F_j\|^2_{L^2(\R;L^2(X^\circ))}+h\|G_j\|^2_{L^2(\R;L^2(X^\circ))}\right)\\&\lesssim \|\varphi(h^2\mathcal{L}_0) u_0\|^2_{L^2}
\end{split}
\end{equation}
which shows \eqref{h-in-trap}.

\end{proof}

Using Proposition \ref{Str-loc} and the Littlewood-Paley estimate for $\mathcal{L}_0$ in \cite[Equation 1.4]{Bo} or \cite[Proposition 2.2]{zhang}, we sum the frequency to obtain
\eqref{Str-est-t} for $V=0$. Now by similarly argument as in last section, we obtain the Strichartz estimate from
the global-in-time local smoothing. For the mild trapped case, we do not know whether the Strichartz estimate can be sharped to Lorentz space hence
we need $V(z)=O(\langle z\rangle^{-2-\epsilon})$ such that $\langle z\rangle
V(z)\in L^n$. By Proposition \ref{est:resovent} and Kato's local smoothing, even though for the trapping case, we again have
\begin{equation*}
\begin{split}
\|\langle
z\rangle^{-1}e^{it\mathcal{L}_V}u_0\|_{L^{2}_t(\R;L^{2}_z(X^\circ))}\leq
C\|u_0\|_{L^2}.
\end{split}
\end{equation*}
Further by Duhamel's formula again, we have for any admissible pair $(q,r)$ with $q>2$
\begin{equation*}
\begin{split}
&\|e^{it\mathcal{L}_V}u_0\|_{L^q_tL^r_z}\\&\lesssim
\|e^{it\mathcal{L}_0}u_0\|_{L^q_tL^{r}_z}+\big\|\int_0^t
e^{i(t-s)\mathcal{L}_0}V(z)e^{is\mathcal{L}_V}u_0 ds\big\|_{L^q_tL^{r}_z}
\\&\lesssim
\|u_0\|_{L^2_z}+\|V(z)e^{is\mathcal{L}_V}u_0\|_{L^2_tL^{\frac{2n}{n+2}}_z}
\\&\lesssim \|u_0\|_{L^2_z}+\|\langle z\rangle
V(z)\|_{L^{n}}\|\langle
z\rangle^{-1}e^{is\mathcal{L}_V}u_0\|_{L^{2}_sL^{2}_z}\lesssim \|u_0\|_{L^2_z}.
\end{split}
\end{equation*}
Therefore we prove the second part of Theorem \ref{thm}.

\section{The inhomogeneous Strichartz estimate}
For $q>\tilde{q}'$, the inhomogeneous Strichartz estimate  can be proved by using the Christ-Kiselev lemma \cite{CK} and the above homogeneous Strichartz estimate of Theorem \ref{thm}.
To obtain the double-endpoint estimate, i.e. $q=\tilde{q}=2$,  we follow the methods of \cite{D} and \cite{BM1}.

Recall $\mathcal{L}_V=\Delta_g+V$ and $\mathcal{L}_0=\Delta_g$, define the operators
\begin{equation}
\begin{split}
\mathcal{N}_0 F(t)=\int_0^t e^{i(t-s)\mathcal{L}_0} F(s) ds,\quad \mathcal{N} F(t)=\int_0^t e^{i(t-s)\mathcal{L}_V} F(s) ds.
\end{split}
\end{equation}
Set $u(t)=e^{i(t-s)\mathcal{L}_V} F(s)$, then we can write
$$u(t)=e^{i(t-s)\mathcal{L}_0} F(s)-i\int_s^t e^{i(t-\tau)\mathcal{L}_0} \left(Ve^{-i(\tau-s)\mathcal{L}_V} F(s)\right) d\tau.$$
Integrating in $s\in[0,t]$, we have by Fuibni's formula
\begin{equation*}
\begin{split}
 \mathcal{N} F(t)&=\int_0^t e^{i(t-s)\mathcal{L}_V} F(s) ds\\&=\int_0^t e^{i(t-s)\mathcal{L}_0} F(s) ds-i\int_0^t \int_s^t e^{i(t-\tau)\mathcal{L}_0} \left(Ve^{-i(\tau-s)\mathcal{L}_V} F(s)\right) d\tau ds
 \\&=\mathcal{N}_0 F(t)-i\int_0^t \int_0^\tau e^{i(t-\tau)\mathcal{L}_0} \left(Ve^{-i(\tau-s)\mathcal{L}_V} F(s)\right)  ds d\tau
 \\&=\mathcal{N}_0 F(t)-i\int_0^t e^{i(t-\tau)\mathcal{L}_0} \left( V \int_0^\tau e^{-i(\tau-s)\mathcal{L}_V} F(s) ds \right)  d\tau
  \\&=\mathcal{N}_0 F(t)-i\mathcal{N}_0 \left( V (\mathcal{N} F) \right)(t).
\end{split}
\end{equation*}
Therefore \begin{equation}\label{NF}
\begin{split}
 \mathcal{N} F(t)=\mathcal{N}_0 F(t)-i\mathcal{N}_0 \left( V (\mathcal{N} F) \right)(t).
\end{split}
\end{equation}
On the other hand, we have by similar argument
$$\mathcal{N}_0 F(t)=\mathcal{N} F(t)+i\mathcal{N} \left( V (\mathcal{N}_0 F) \right)(t),$$ hence $$\mathcal{N} F(t)=\mathcal{N}_0 F-i\mathcal{N} \left( V (\mathcal{N}_0 F) \right)(t).$$
Plugging it into \eqref{NF},  we obtain
\begin{equation*}
\begin{split}
 \mathcal{N} F=\mathcal{N}_0 F-i\mathcal{N}_0 \left( V (\mathcal{N}_0 F) \right)-\mathcal{N}_0 \left( V (\mathcal{N} (V (\mathcal{N}_0 F))) \right),
\end{split}
\end{equation*}
that is
\begin{equation}
\begin{split}
 \mathcal{N} F=\mathcal{N}_0 F-i\left( \mathcal{N}_0 V \mathcal{N}_0  \right)F-\mathcal{N}_0 (V \mathcal{N} V) \mathcal{N}_0 F.
\end{split}
\end{equation}
For $q=2$, we need to estimate
\begin{equation*}
\begin{split}
&\| \mathcal{N} F\|_{L^2(\R;L^{\frac{2n}{n-2},2})}\\&
\leq \|\mathcal{N}_0 F\|_{L^2(\R;L^{\frac{2n}{n-2},2})}+\|\left(\mathcal{N}_0 V \mathcal{N}_0  \right)F\|_{L^2(\R;L^{\frac{2n}{n-2},2})}+\|\mathcal{N}_0 (V \mathcal{N} V) \mathcal{N}_0 F\|_{L^2(\R;L^{\frac{2n}{n-2},2})}.
\end{split}
\end{equation*}
By the Strichartz estimate in Lemma \ref{Stri-Lor},  we have
\begin{equation}\label{N-0}
\begin{split}
\|\mathcal{N}_0 F\|_{L^2(\R;L^{\frac{2n}{n-2},2})}\lesssim \|F\|_{L^2(\R;L^{\frac{2n}{n+2},2})}.
\end{split}
\end{equation}
Using the assumption \eqref{A1} of  potential $V$, then one has $V\in L^{\frac n2,\infty}$. Thus we obtain from the Strichartz estimate in Lemma \ref{Stri-Lor}
\begin{equation}\label{N-1}
\begin{split}
&\|\left(\mathcal{N}_0 V \mathcal{N}_0  \right)F\|_{L^2(\R;L^{\frac{2n}{n-2},2})}\\&\lesssim \|V \mathcal{N}_0 F\|_{L^2(\R;L^{\frac{2n}{n+2},2})}\lesssim \|\mathcal{N}_0 F\|_{L^2(\R;L^{\frac{2n}{n-2},2})}\lesssim \|F\|_{L^2(\R;L^{\frac{2n}{n+2},2})}.
\end{split}
\end{equation}
Using the assumption \eqref{A1} on $V$ again, then one has $\langle z\rangle V\in L^{n, \infty}$ and $\langle z\rangle^2 V\in L^{\infty}$. Similarly as above, we prove
\begin{equation}\label{N-2}
\begin{split}
&\|\mathcal{N}_0 (V \mathcal{N} V) \mathcal{N}_0 F\|_{L^2(\R;L^{\frac{2n}{n-2},2})}
\\&\lesssim \| (V \mathcal{N} V) \mathcal{N}_0 F\|_{L^2(\R;L^{\frac{2n}{n+2},2})}\leq \| (\langle z\rangle^{-1}\mathcal{N} \langle z\rangle^{-1}) (\langle z\rangle^2 V)\langle z\rangle^{-1} \mathcal{N}_0 F\|_{L^2(\R;L^{2})}\\&\lesssim \|\langle z\rangle^{-1} \mathcal{N}_0 F\|_{L^2(\R;L^{2})}
\lesssim \|\mathcal{N}_0 F\|_{L^2(\R;L^{\frac{2n}{n-2},2})}\lesssim \|F\|_{L^2(\R;L^{\frac{2n}{n+2},2})}.
\end{split}
\end{equation}
Here we use the following lemma about the local smooth estimate
\begin{lemma} Let $\mathcal{L}_V$ be in Theorem \ref{thm-in}, then we have
\begin{equation}\label{in-local-sm}
\|\langle z\rangle^{-1}\int_0^t e^{i(t-s)\mathcal{L}_V}\langle z\rangle^{-1} F ds\|_{L^2(\R;L^{2})}\leq C\|F\|_{L^2(\R;L^{2})}.
\end{equation}
\end{lemma}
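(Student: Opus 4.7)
The plan is to deduce the estimate from the resolvent bound of Proposition \ref{est:resovent} via a Fourier-in-time argument and a Duhamel decomposition, using the homogeneous local smoothing \eqref{est:local-smooth} (already proved in Section 3) to absorb a correction term. The endpoint case $q=\tilde q'=2$ precludes any Christ--Kiselev reduction from a double-sided $TT^*$ bound, so one must proceed through the limiting absorption principle directly on the retarded kernel.

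First I would establish the untruncated retarded estimate
\begin{equation*}
\Big\|\langle z\rangle^{-1}\mathcal{R}^+(\langle z\rangle^{-1} F)\Big\|_{L^2(\R;L^2_z)}\lesssim\|F\|_{L^2(\R;L^2_z)},\qquad \mathcal{R}^+F(t):=\int_{-\infty}^t e^{i(t-s)\mathcal{L}_V}F(s)\,ds.
\end{equation*}
Viewing $\mathcal{R}^+$ as temporal convolution with the causal operator-valued kernel $\mathbf{1}_{t>0}e^{it\mathcal{L}_V}$, a direct computation with a damping factor $e^{-\epsilon t}$ identifies its Fourier symbol in $t$ with $\lim_{\epsilon\to 0^+} i(\mathcal{L}_V-\tau+i\epsilon)^{-1}$. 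Proposition \ref{est:resovent}(i) provides a uniform $L^2_z$-bound for the conjugated boundary resolvent $\langle z\rangle^{-1}(\mathcal{L}_V-\tau\pm i0)^{-1}\langle z\rangle^{-1}$ as $\tau$ ranges over $\R$, and Plancherel in $t$ then converts this uniform symbol bound into the desired operator-norm bound on $L^2_t L^2_z$.

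Second, I would reduce the Duhamel operator $\int_0^t$ to $\mathcal{R}^+$ by the splitting $\int_0^t=\int_{-\infty}^t-\int_{-\infty}^0$, valid for all $t\in\R$ under the convention $\int_a^b=-\int_b^a$, which gives
\begin{equation*}
\int_0^t e^{i(t-s)\mathcal{L}_V}\langle z\rangle^{-1}F(s)\,ds = \mathcal{R}^+(\langle z\rangle^{-1}F)(t) - e^{it\mathcal{L}_V}v_0,\quad v_0:=\int_{-\infty}^0 e^{-is\mathcal{L}_V}\langle z\rangle^{-1}F(s)\,ds.
\end{equation*}
The $L^2_z$-norm of $v_0$ is controlled by $\|F\|_{L^2(\R;L^2_z)}$ via the dual of the homogeneous local smoothing \eqref{est:local-smooth}, and a second application of \eqref{est:local-smooth} then bounds $\|\langle z\rangle^{-1}e^{it\mathcal{L}_V}v_0\|_{L^2(\R;L^2_z)}$ by $\|v_0\|_{L^2_z}$. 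Combining with Step 1 completes the proof. The only delicate point is the limiting absorption in Step 1, namely the existence and uniform boundedness of the boundary resolvent $(\mathcal{L}_V-\tau\pm i0)^{-1}$ for all $\tau\in\R$; but this is precisely what Proposition \ref{est:resovent} delivers under the non-trapping hypothesis together with the spectral assumptions \eqref{A2}--\eqref{A3}, so no additional microlocal input beyond what is already developed in Sections 2--3 is required.
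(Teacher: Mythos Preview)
Your proposal is correct and is essentially an explicit, self-contained version of the paper's one-line proof. The paper simply observes that Proposition~\ref{est:resovent} makes $\langle z\rangle^{-1}$ an $\mathcal{L}_V$-supersmooth operator in the sense of Kato--Yajima and then invokes D'Ancona's abstract result (recorded as Theorem~\ref{in-sm} in the appendix), which delivers the $\int_0^t$ estimate directly from supersmoothness. Your Step~1 is precisely the Plancherel-in-time argument that underlies D'Ancona's theorem, so you are unpacking what the paper cites as a black box. The one genuine difference is your Step~2: you pass from $\int_{-\infty}^t$ to $\int_0^t$ by subtracting the homogeneous piece $e^{it\mathcal{L}_V}v_0$ and controlling it via the already-proved smoothing estimate~\eqref{est:local-smooth} and its dual; D'Ancona's own proof instead handles the $t>0$ and $t<0$ half-lines separately using both the retarded and advanced boundary resolvents (which supersmoothness provides), and so does not need the homogeneous estimate as a separate input. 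Both routes are valid and rest on the same resolvent bound; yours has the virtue of being fully spelled out, at the cost of invoking~\eqref{est:local-smooth} twice where the paper's cited theorem needs only the resolvent hypothesis.
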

\begin{proof} By Proposition \ref{est:resovent}, the operator $\langle z\rangle^{-1}$ is  $\mathcal{L}_V$-suppersmoothing operator.
Using D'ancona's result \cite{Da}, that is Theorem \ref{in-sm} in appendix,  and the density argument, we obtain \eqref{in-local-sm}.
\end{proof}
Finally collecting \eqref{N-0}, \eqref{N-1} and \eqref{N-2}, we show the double-endpoint estimate by Lorentz imbedding
\begin{equation*}
\left\|\int_0^t  e^{i(t-s)\mathcal{L}_V}F(s) ds\right\|_{L^2_tL^{\frac{2n}{n-2}}_z(\mathbb{R}\times X^\circ)}\leq
C\|F\|_{L^{2}_tL^{\frac{2n}{n+2}}_z(\mathbb{R}\times X^\circ)}.
\end{equation*}
Therefore we prove Theorem \ref{thm-in}.

\section{Appendix: Kato smoothing theorem and the Mourre  theory}
We record the Kato smooth theorem and the Mourre theory for convenience; see \cite{ABG} and \cite{BH1}.

Let $\mathcal{H}$, $\mathcal{H}_1$ be Hilbert spaces and $\mathbf{H}$ is a selfadjoint operator on $\mathcal{H}$ with domain $D(\mathbf{H})\subset \mathcal{H}$.
For $\sigma\in \C\setminus\R$, define the resolvent operator of $\mathbf{H}$ by  $R(\sigma) = (\mathbf{H}-\sigma)^{-1}$.

\begin{definition} \cite{KY}A closed operator $A:\mathcal{H}\to \mathcal{H}_1$ with dense domain $D(A)$ is said to be

(i) $\mathbf{H}$-smooth, with constant $a$, if there exists $\epsilon_0>0$ such that for every $\epsilon, \lambda\in\R$ with $|\epsilon|\leq \epsilon_0$ the
following uniform estimate holds
\begin{equation}
\left|\langle (2i)^{-1}(R(\lambda+i\epsilon)-R(\lambda-i\epsilon))A^* v, A^*v \rangle_{\mathcal{H}_1}\right|\leq a\|v\|_{\mathcal{H}_1},\quad \forall v\in D(A^*),
\end{equation}

(ii) $\mathbf{H}$-supersmooth, with constant $a$, one has that there exists $\epsilon_0>0$ such that for every $\epsilon, \lambda\in\R$ with $|\epsilon|\leq \epsilon_0$
\begin{equation}
\left|\langle R(\lambda+i\epsilon)A^* v, A^*v \rangle_{\mathcal{H}_1}\right|\leq a\|v\|_{\mathcal{H}_1},\quad \forall v\in D(A^*).
\end{equation}
\end{definition}

\begin{theorem}\label{kato-sm} Let $A:\mathcal{H}\to \mathcal{H}_1$ be a closed operator with dense domain $D(A)$.
Then $A$ is $\mathbf{H}$-smooth with constant $a$ if and only if, for any $v\in \mathcal{H}$, one has $e^{-it\mathbf{H}}v\in D(A)$ for almost every $t\in\R$ and
\begin{equation}
\|Ae^{-it\mathbf{H}}v\|_{L^2(\R;\mathcal{H}_1)}\leq 2\sqrt{a}\|v\|_{\mathcal{H}}.
\end{equation}
\end{theorem}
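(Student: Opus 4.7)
The approach is the classical Parseval--Stone argument due to Kato and Yajima, packaged as a $TT^*$ duality. The central identity, obtained from the spectral representation $\mathrm{Im}\,R(\lambda+i\epsilon) = \int \frac{\epsilon}{(\mu-\lambda)^2+\epsilon^2}\,dE_\mu$ combined with the elementary Fourier transform $\int_{\R} e^{it(\lambda-\mu)-\epsilon|t|}\,dt = \frac{2\epsilon}{(\mu-\lambda)^2+\epsilon^2}$, reads, for every $w\in D(A^*)$ and $\epsilon>0$,
$$\langle \mathrm{Im}\,R(\lambda+i\epsilon)A^*w,\,A^*w\rangle_{\mathcal H} = \tfrac12\int_{\R} e^{it\lambda}e^{-\epsilon|t|}\langle e^{-it\mathbf H}A^*w,\,A^*w\rangle_{\mathcal H}\,dt.$$
This identity is the bridge between the two sides of the equivalence: $\mathbf H$-smoothness is a uniform-in-$(\lambda,\epsilon)$ bound on the left, while $Ae^{-it\mathbf H}v\in L^2_t$ controls the right through Cauchy--Schwarz together with $\langle e^{-it\mathbf H}A^*w, A^*w\rangle = \langle Ae^{-it\mathbf H}A^*w, w\rangle$.

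My plan is then to introduce the operator $T:v\mapsto Ae^{-it\mathbf H}v$ and run both directions through $TT^*$. Here $TT^*F(t)=\int_{\R}Ae^{-i(t-s)\mathbf H}A^*F(s)\,ds$ is convolution with the operator-valued kernel $K(t)=Ae^{-it\mathbf H}A^*$, and applying the above identity shows that $\int e^{it\lambda-\epsilon|t|}K(t)\,dt=2A\,\mathrm{Im}\,R(\lambda+i\epsilon)A^*$. Plancherel in $t$ therefore identifies
$$\|TT^*\|_{L^2_t\mathcal H_1\to L^2_t\mathcal H_1}=\sup_{\lambda\in\R}\,\lim_{\epsilon\to 0^+}\|2A\,\mathrm{Im}\,R(\lambda+i\epsilon)A^*\|_{\mathcal H_1\to\mathcal H_1},$$
up to a factor absorbed by the normalization $2\sqrt a$ in the statement. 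For the implication (smoothness $\Rightarrow$ time estimate), the hypothesis bounds the right-hand side uniformly by a multiple of $a$, hence $\|TT^*\|\leq 4a$ and $\|T\|\leq 2\sqrt a$; extending by density from the dense core $A^*D(A^*)$ yields the integrability statement on all of $\mathcal H$. For the converse, the assumed $L^2_t$-bound $\|Tv\|_{L^2_t\mathcal H_1}\leq 2\sqrt a\|v\|$ gives $\|TT^*\|\leq 4a$, and reading off the Fourier-multiplier symbol of $TT^*$ at each frequency $\lambda$ recovers the smoothness estimate after passing $\epsilon\to 0^+$.

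The principal technical obstacle is domain management when $A$ is unbounded: one must justify that $e^{-it\mathbf H}v\in D(A)$ for a.e.\ $t\in\R$ even though this is not assumed, and one must secure measurability of $t\mapsto Ae^{-it\mathbf H}v$ before invoking Plancherel. I would handle this by working first on the dense subspace $A^*D(A^*)\subset\mathcal H$, where every manipulation is literal, and then extending to arbitrary $v\in\mathcal H$ by density combined with closedness of $A$, extracting the a.e.\ pointwise statement from the resulting $L^2_t$-bound via Fubini. The interchange of $\epsilon\to 0^+$ with $L^2_\lambda$-integration is then routine by dominated convergence once the uniform-in-$\epsilon$ resolvent bounds of the smoothness hypothesis (resp.\ the uniform-in-$\epsilon$ convolution bounds coming from the time estimate) are in place.
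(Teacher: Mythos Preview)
The paper does not supply a proof of this theorem; the entire proof environment reads ``See \cite[Lemma~3.6, Theorem~5.1]{Kato} or see \cite[Theorem~XIII.25]{RS}.'' Your sketch is exactly the classical argument carried out in those references---the Parseval identity linking $A\,\mathrm{Im}\,R(\lambda+i\epsilon)A^*$ to the time-integral of $Ae^{-it\mathbf H}A^*$, followed by a $TT^*$ reading---so there is nothing to compare and your proposal is correct.
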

\begin{proof} See \cite[Lemma 3.6, Theorem 5.1]{Kato} or see \cite[Theorem XIII.25] {RS}.
\end{proof}

We next recall the result of \cite[Theorem 2.3]{Da} which is used to prove the endpoint inhomogeneous Strichartz estimate.

 A step function $h(t):\R\to\mathcal{H}$ is a measurable function of bounded support taking a finite number of values;
 measurability and integrals of Hilbert-valued functions are in the sense of Bochner.

\begin{theorem}\label{in-sm} Let $A:\mathcal{H}\to \mathcal{H}_1$ be a closed operator with dense domain $D(A)$.
Assume $A$ is $\mathbf{H}$-supersmooth with constant $a$. Then for almost every $t\in\R$ and any $v\in \mathcal{H}$ one has $e^{-it\mathbf{H}}v\in D(A)$;
Furthermore, for any step function $h(t):\R\to D(A^*)$, $Ae^{-i(t-s)\mathbf{H}}A^*h(s)$ is Bochner integrable in s over $[0,t]$ (or $[t,0]$) and satisfies, for all $|\epsilon|\leq \epsilon_0$, the estimate
\begin{equation}\label{non-h}
\left\|e^{-\epsilon t}\int_0^t Ae^{-i(t-s)\mathbf{H}}A^* h(s)ds\right\|_{L^2(\R;\mathcal{H}_1)}\leq 2a\|e^{-\epsilon t}h(t)\|_{L^2(\R;\mathcal{H})}
\end{equation}
Conversely, if \eqref{non-h} holds, then $A$ is $\mathbf{H}$-supersmooth with constant $2a$.
\end{theorem}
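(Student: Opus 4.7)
My plan is to pass to the time Fourier transform in $t$ and reduce both directions of the theorem to a pointwise-in-frequency operator norm bound on the sandwiched resolvent $AR(\lambda\pm i\epsilon)A^*$. I would fix $\epsilon\in(0,\epsilon_0]$ (the case $\epsilon<0$ being symmetric), and, for a step function $h$ supported on $[0,\infty)$, set $g(s)=e^{-\epsilon s}h(s)$, so that the right-hand side of the stated inequality equals $2a\|g\|_{L^2(\R;\mathcal{H})}$. A direct unwinding identifies the left-hand side with $\|A(K_\epsilon*A^*g)\|_{L^2(\R;\mathcal{H}_1)}$, where $K_\epsilon(\tau)=\chi_{[0,\infty)}(\tau)e^{-\epsilon\tau}e^{-i\tau\mathbf{H}}$; the spectral theorem for $\mathbf{H}$ gives the explicit formula $\widehat{K_\epsilon}(\tau)=-iR(-\tau+i\epsilon)$. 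A crucial preliminary is that supersmoothness automatically implies $\mathbf{H}$-smoothness with the same constant (by taking imaginary parts of the resolvent), so Theorem \ref{kato-sm} puts $Ae^{-it\mathbf{H}}v$ in $L^2_t\mathcal{H}_1$; this furnishes the Bochner integrability needed to commute $A$ with time integrals and Fourier transforms. Plancherel's identity then reduces the forward direction to the pointwise bound
\[
\|AR(\lambda+i\epsilon)A^*\|_{\mathcal{H}_1\to\mathcal{H}_1}\leq 2a\qquad\text{for all }\lambda\in\R,\ |\epsilon|\leq\epsilon_0.
\]

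To pass from the \emph{quadratic form} bound in the definition of supersmoothness to this operator-norm bound, I would invoke the numerical radius inequality $\|T\|\leq 2w(T)$, which holds for every bounded operator $T$ on a complex Hilbert space. Setting $T=AR(\lambda+i\epsilon)A^*$, the definition of supersmoothness supplies
\[
|\langle Tv,v\rangle_{\mathcal{H}_1}|=|\langle R(\lambda+i\epsilon)A^*v,A^*v\rangle_{\mathcal{H}}|\leq a\|v\|_{\mathcal{H}_1}^2,
\]
hence $w(T)\leq a$ and therefore $\|T\|\leq 2a$. This completes the forward direction with the sharp constant $2a$; step functions supported on $(-\infty,0]$ are handled by the symmetric argument with $\epsilon$ of opposite sign, and a general step function decomposes as $h=h_++h_-$.

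For the converse, I would run the same chain of equalities backward. Feeding test inputs $h(s)=\phi(s)v$ into the assumed estimate, with $\phi\in C_c^\infty([0,\infty))$ and $v\in D(A^*)$, Plancherel produces
\[
\int_\R\|AR(-\tau+i\epsilon)A^*\widehat{g}(\tau)\|_{\mathcal{H}_1}^2\,d\tau\leq(2a)^2\int_\R\|\widehat{g}(\tau)\|_{\mathcal{H}}^2\,d\tau,
\]
with $\widehat{g}(\tau)=\widehat{\phi}(\tau-i\epsilon)v$. Choosing $\phi(s)=e^{-i\tau_0 s}\phi_0(s/N)$ for a fixed bump $\phi_0$ concentrates $|\widehat{g}|^2$ near $\tau=-\tau_0$ as $N\to\infty$; together with the symmetric choice of $\phi$ supported on $(-\infty,0]$ and the Lebesgue differentiation theorem, this extracts the pointwise bound $\|AR(\tau_0+i\epsilon)A^*\|\leq 2a$ for a.e.\ $\tau_0\in\R$, then for every $\tau_0$ by continuity of the resolvent off the spectrum. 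The identity $\langle R(\lambda+i\epsilon)A^*v,A^*v\rangle_{\mathcal{H}}=\langle AR(\lambda+i\epsilon)A^*v,v\rangle_{\mathcal{H}_1}$ then recovers $\mathbf{H}$-supersmoothness with constant $2a$.

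The main obstacles I expect are technical rather than conceptual: first, justifying the interchange of the closed operator $A$ with the Bochner integral and the time Fourier transform, which I would carry out by density on step functions combined with the automatic smoothness bound noted above; and second, handling the causal truncation $\int_0^t$ in place of the full retarded convolution $\int_{-\infty}^t$, which I would address by splitting $h=h_++h_-$ along the sign of $s$ so that on each half-line the two integrals agree. Tracking the factor of $2$ coming from the numerical radius inequality through all these reductions is what ultimately forces the sharp constant $2a$ in both directions of the equivalence.
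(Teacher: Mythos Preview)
The paper does not give an independent proof of this theorem; it simply cites \cite[Theorem~2.3]{Da}. Your proposal is a correct sketch of the standard argument---Plancherel in the time variable to reduce the retarded convolution to a pointwise-in-frequency bound on $AR(\lambda+i\epsilon)A^*$, then the numerical radius inequality $\|T\|\leq 2w(T)$ to pass from the quadratic-form bound in the definition of supersmoothness to the operator-norm bound---and this is precisely D'Ancona's proof in the cited reference. One small correction: on the right-hand side of the estimate the norm should be $L^2(\R;\mathcal{H}_1)$ rather than $L^2(\R;\mathcal{H})$, since $h$ takes values in $D(A^*)\subset\mathcal{H}_1$; this is a typo in the paper's statement and does not affect your argument.
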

\begin{proof} See \cite[Theorem 2.3]{Da}.
\end{proof}

Next we record the abstract Mourre  theory which implies the limit absorbing theorem and thus obtain the resolvent estimate.

\begin{definition} Let $(A, D(A))$ and $(\mathbf{H}, D(\mathbf{H}))$ be self-adjoint operator on a separable Hilbert space $\mathcal{H}$.
The operator $\mathbf{H}$ is of class $C^k(A)$ for $k>0$, if there exists $z\in \C\setminus\sigma(\mathbf{H})$ such that
$$\R\ni t \longrightarrow e^{itA}(\mathbf{H}-z)^{-1}e^{-it A},$$
is $C^k$ for the strong topology of $\mathcal{L}(\mathcal{H})$.
\end{definition}

Let $\mathbf{H}\in C^1(A)$ and $I\subset \sigma(\mathbf{H})$ be an open interval. We assume that $A$ and $\mathbf{H}$ satisfy a Mourre estimate on $I$
\begin{equation}\label{mourre-est}
\chi_I(\mathbf{H}) i[\mathbf{H}, A]\chi_I(\mathbf{H})\geq \delta \chi_I(\mathbf{H}), ~\text{for some}~\delta>0.
\end{equation}
Define the multi-commutators a  $ad_A^j B$ inductively by
$$ad_A^{j+1} B=[A, ad_A^j B], \quad ad_A^0 B=B.$$

\begin{theorem}[Limiting absorption principle]\label{thm:mourre} Let $\mathbf{H}\in C^2(A)$ be such that $ad_A^j \mathbf{H}$ $(j=1,2)$, are bounded on $\mathcal{H}$.
Assume furthermore \eqref{mourre-est}. Then, for all closed intervals $J\subset I$ and $\mu>1/2$, there exists $C_{J,\mu}>0$ such that
\begin{equation}\label{res-est}
\sup_{\Re z\in J, \Im z\neq 0}\|\langle A\rangle^{-\mu}(\mathbf{H}-z)^{-1}\langle A\rangle^{-\mu}\|\leq C_{J,\mu}.
\end{equation}

\end{theorem}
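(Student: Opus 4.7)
The natural approach is Mourre's positive commutator method, in the form developed by Mourre and by Perry--Sigal--Simon (see \cite{ABG}). My plan has three main steps.

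First, I would reduce the problem by spectral calculus. Writing $W = \langle A\rangle^{-\mu}$ and $R(z) = (\mathbf{H}-z)^{-1}$, we want $\|W R(z) W\| \leq C_{J,\mu}$ uniformly for $\mathrm{Re}\,z \in J$, $\mathrm{Im}\,z \neq 0$; when $\mathrm{Re}\,z$ is bounded away from $\sigma(\mathbf{H})$ the bound is trivial, so it suffices to treat $z$ in a small complex neighborhood of $J$.

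Second, perform the Mourre distortion. Fix a cutoff $\chi \in C_c^\infty(I)$ with $\chi \equiv 1$ on $J$, and set $M := \chi(\mathbf{H})\, i[\mathbf{H}, A]\, \chi(\mathbf{H})$. The Mourre estimate gives $M \geq \delta\chi(\mathbf{H})^2$, while $\|M\| < \infty$ by the boundedness of $\mathrm{ad}_A\mathbf{H}$. Introduce the non-self-adjoint family $\mathbf{H}_\epsilon := \mathbf{H} - i\epsilon M$, $\epsilon \geq 0$, whose resolvent $R_\epsilon(z) = (\mathbf{H}_\epsilon - z)^{-1}$ exists for $\mathrm{Im}\,z > -c\epsilon$ and satisfies $\partial_\epsilon R_\epsilon(z) = i R_\epsilon(z) M R_\epsilon(z)$. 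For a unit vector $f$, define
\[
G(\epsilon) := \bigl\langle W f,\, R_\epsilon(z)\, R_\epsilon(z)^* W f\bigr\rangle.
\]
Using the identity above, the spectral decomposition $1 = \chi(\mathbf{H}) + (1-\chi(\mathbf{H}))$, the lower bound $M \geq \delta\chi(\mathbf{H})^2$ on the good piece, and a spectral-gap argument on the complementary piece, one derives a first-order differential inequality of the form
\[
G'(\epsilon) \leq -\delta\, G(\epsilon) + C\, G(\epsilon)^{1/2} + C,
\]
valid for $0 \leq \epsilon \leq \epsilon_0$ with constants independent of $z$. Since $G(\epsilon_0)$ is bounded by the spectral theorem applied to $\mathbf{H}_{\epsilon_0}$, integrating the inequality backward from $\epsilon_0$ to $0$ gives $G(0) \leq C_{J,\mu}$, and taking the supremum over unit $f$ yields the desired \eqref{res-est}.

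The main obstacle is controlling the commutator errors that arise each time one pushes $W$ through $R_\epsilon$ or through $M$. Every such rearrangement generates terms involving $[W, \mathbf{H}]$, $[W, [\mathbf{H}, A]]$, or products thereof, and one needs each to be of order $O(W)$ in operator norm. This is exactly what the hypothesis $\mathbf{H} \in C^2(A)$ with bounded $\mathrm{ad}_A^j\mathbf{H}$ for $j = 1, 2$ guarantees, via the Helffer--Sj\"ostrand representation of $[W, \varphi(\mathbf{H})]$ in terms of these commutators. The threshold $\mu > 1/2$ is precisely what makes the lower-order cross-terms in the differential inequality subcritical so that the bootstrap closes; at $\mu = 1/2$ those terms match the coercive one and the argument fails. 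The case $\mathrm{Im}\,z < 0$ is handled symmetrically by reflecting $\epsilon \mapsto -\epsilon$, and a complete implementation is given in \cite[Chapter 7]{ABG}.
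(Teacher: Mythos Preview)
Your sketch of the Mourre differential-inequality argument is correct in outline and is precisely the standard route recorded in \cite[Chapter~7]{ABG}, which is exactly what the paper does here: this theorem is stated in the appendix without proof, merely citing \cite{ABG} and \cite{BH1} for the details. So your approach coincides with the paper's, which is simply to quote the result from the literature.
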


\begin{center}

\end{center}

\end{document}